\numberwithin{equation}{section}
\numberwithin{equation}{section}
\newtheorem{defi}{Definition}[section]
\newtheorem{theorem}[defi]{Theorem}
\newtheorem{lemma}[defi]{Lemma}
\newtheorem{proposition}[defi]{Proposition}
\newtheorem{remark}[defi]{Remark}
\newcommand{\cS}{\mathcal{S}}
\newcommand{\cA}{{\mathcal A}}
\newcommand{\cE}{{\mathcal E}}
\newcommand{\cH}{{\mathcal H}}
\newcommand{\cL}{{\mathcal L}}
\newcommand{\cP}{{\mathcal P}}
\newcommand{\R}{{\mathbb R}}
\newcommand{\bJ}{\mathbb{J}}
\newcommand{\bK}{\mathbb{K}}
\newcommand{\bN}{\mathbb{N}}
\newcommand{\bR}{\mathbb{R}}
\newcommand{\bS}{\mathbb{S}}
\newcommand{\gf}{g_{\flat}}
\newcommand{\gs}{g^{\sharp}}
\newcommand{\sD}{\mathsf{D}}
\newcommand{\sM}{\mathsf{M}}
\newcommand{\sN}{\mathsf{N}}
\newcommand{\sT}{\mathsf{T}}
\renewcommand{\epsilon}{\varepsilon}
\newcommand{\bb}{{\bb}}
\newcommand{\eps}{\varepsilon}
\newcommand{\la}{\langle}
\newcommand{\ra}{\rangle}
\newcommand{\PH}{\mathbb{P}_H}
\newcommand{\gd}{\mathsf{grad}\,}
\newcommand{\dv}{\mathsf{div}\,}
\newcommand{\rot}{\mathsf{rot}\,}
\newcommand{\Ric}{\mathsf{Ric\,}}
\newcommand{\Aw}{A^\mathsf{w}}
\newcommand{\sw}{\mathsf{w}}
\begin{document}

\title{Coriolis-driven fluid motion on \\ spherical surfaces}

\author{Yuanzhen Shao}
\address{The University of Alabama\\
	Tuscaloosa, Alabama \\
	USA}
\email{yshao8@ua.edu}

\author{Gieri Simonett}
\address{Department of Mathematics\\
        Vanderbilt University\\
        Nashville, Tennessee\\
        USA}
\email{gieri.simonett@vanderbilt.edu}

\author{Mathias Wilke}
\address{Martin-Luther-Universit\"at Halle-Witten\-berg\\
         Institut f\"ur Mathematik \\
         Halle (Saale), Germany}
\email{mathias.wilke@mathematik.uni-halle.de}

\thanks{This work was supported by a grant from the Simons Foundation (\#853237, Gieri Simonett)  and 
a grant from the National Science Foundation (DMS-2306991, Yuanzhen Shao).}

\subjclass[2020]{Primary: 35Q30, 35Q35, 35B35.  Secondary: 76D05}
% 35A01  Existence problems for PDEs: global existence, local existence, non-existence
% 35Q30 Navier-Stokes equations
% 35Q35 PDEs in connection with fluid mechanics
% 35B65 Smoothness and regularity of solutions to PDEs
% 35B  Qualitative properties of solutions to partial differential equations
% 35B40 Asymptotic behavior of solutions to PDEs
% 35B65  Smoothness and regularity of solutions to PDEs
% 35D35 Strong solutions to PDEs
% 35K59  Quasilinear parabolic equations
% 35Q  Partial differential equations of mathematical physics and other areas of application

% 76D03  Existence, uniqueness, and regularity theory for incompressible viscous fluids
% 76D05 Navier-Stokes equations for incompressible viscous fluids
 \keywords{global existence.}

\begin{abstract}
We consider the motion of an incompressible viscous fluid on a sphere, incorporating the effects of the Coriolis force.
We demonstrate that global solutions exist for any divergence-free initial condition with finite kinetic energy.
Furthermore, we show that each solution converges at an exponential rate to a state that is aligned with the rotation of the sphere.
\end{abstract}

\maketitle
%%%%%%%%%%%%%%%%%%%%%
\section{Introduction}

\bigskip
In previous work, we have analyzed a mathematical model that describes the motion of an incompressible 
viscous fluid on a compact Riemannian manifold $(\sM, g)$. In case $\sM$ is boundary-less, 
the motion of the fluid is governed by 
\begin{equation}
\label{NS}
\left\{\begin{aligned}
\varrho \left( \partial_t u + \nabla_u  u\right) -\mu_s  (\Delta u +  \Ric u) + \gd\pi &=0 &&\text{on}&&\sM ,\\
\dv u&=0 &&\text{on}&&\sM ,\\
u(0) & = u_0 &&\text{on}&& \sM. \\
\end{aligned}\right.
\end{equation}
Here, the unknowns are the fluid velocity $u$  and the fluid pressure $\pi$.  $\varrho>0$ is the (constant) density, $\mu_s>0$ is the surface shear viscosity.
Moreover, $\nabla_u v$ denotes the Levi-Civita connection (the covariant derivative) for tangent vector fields $u,v \in T\sM$,
$\Delta $ is the connection Laplacian (the negative of the Bochner Laplacian) and $\Ric$ is the Ricci curvature of $\sM$.
In local coordinates, these operators are expressed by
\begin{equation*}
%\label{Bochner-Ricci-local}
\Delta  u= g^{ij}(\nabla_i \nabla_j -\Gamma_{ij}^k \nabla_k)u , \quad
 \Ric u= R^i_j u^j\frac{\partial} {\partial x^i}, 
\end{equation*}
with $\nabla_j=\nabla_{\frac{\partial}{\partial x^j}}$ being covariant derivatives, and where $\Ric = \Ric^i_j \frac{\partial }{\partial x_i} \otimes dx^j$ is the Ricci $(1,1)$-tensor.
 More details are given in Appendix A.
 
 \medskip\noindent
 In case $\dim \sM=2$, the system \eqref{NS} takes on the form
 \begin{equation}
\label{NS-2D}
\left\{\begin{aligned}
\partial_t u + \nabla_u  u  -\mu_s  (\Delta u + \kappa  u) + \gd\pi &=0 &&\text{on}&&\sM ,\\
\dv u&=0 &&\text{on}&&\sM ,\\
u(0) & = u_0 &&\text{on}&& \sM, \\
\end{aligned}\right.
\end{equation}
where $\kappa$ is the Gaussian curvature of $\sM$.
Here and in the sequel, we set $\varrho=1$.
\goodbreak

\medskip\noindent
It was  shown in \cite{SiWi22} that  any solution of \eqref{NS-2D} with a divergence-free initial value
of finite kinetic energy exists globally and converges exponentially fast  to an equilibrium, that is, to a Killing field.
A similar result was obtained in \cite{SSW25} in case $\sM$ is a compact manifold with boundary, and Navier boundary conditions are imposted \cite{SSW25}.

\medskip
Here we will consider the particular situation where $\sM$ is a sphere in $\R^3$ of radius $a$. In this case, the Killing fields of $\sM$ consist 
exactly of rotations about an axis $(\omega_1, \omega_2,\omega_3)$ and the result above then says that any solution will converge to a steady
motion of rotation around such an axis.

\medskip
As a new ingredient, we will include a Coriolis force on $\sM$ in this paper. Here one might think of $\sM$ being an aqua planet that is completely covered by a fluid 
and is  rotating about the $z$-axis.

\medskip
In geography, the Coriolis force plays an important role in oceanic circulation patterns.
The Coriolis force, also known as the Coriolis effect, is an apparent force experienced by objects moving within a rotating reference frame. This phenomenon arises because of the rotation of the earth and is described mathematically as a deflection of moving objects relative to the planet's surface. Although the Coriolis force is not a true force in the sense of a physical interaction, it is a necessary consideration when analyzing motion in a non-inertial, rotating frame, such as the earth.

The magnitude of the Coriolis force depends on three factors: the speed of the moving object, its latitude, and the angular velocity of the earth's rotation. In the northern hemisphere, the Coriolis force causes moving objects to deflect to the right, while in the southern hemisphere, it causes a deflection to the left. At the equator, the effect is negligible because the rotational velocity is perpendicular to the direction of motion, whereas it becomes strongest at the poles.

\medskip
We will use the following equations to model the effect of the Coriolis force on the motion of an incompressible fluid $u$ 
 \begin{equation}
\label{NS-Coriolis}
\left\{\begin{aligned}
 \partial_t u + \nabla_u  u  -\mu_s  (\Delta u + \kappa  u) + Cu +  \gd\pi &=0 &&\text{on}&&\sM ,\\
\dv u&=0 &&\text{on}&&\sM ,\\
u(0) & = u_0 &&\text{on}&& \sM, \\
\end{aligned}\right.
\end{equation}
where $\sM:= \bS^2_a$ is the sphere in $\R^3$ of radius $a$ centered at the origin.
Here, $C$ represents the effect of the Coriolis force and is given by
\begin{equation*}
%\label{Coriolis-force}
Cu = - 2\omega \cos \phi Ku,
\end{equation*}
where $\phi$ denotes the colatitude and $\omega$ is the angular velocity of the rotating reference frame.
The  linear operator $K: T\sM \to T\sM$, acting on tangential vector fields, is defined in (general) local coordinates by
\begin{equation*}
K u = \eps^i_j u^j \frac{\partial}{\partial x_i} = g^{ik}\eps_{kj} u^j  \frac{\partial}{\partial x_i} , \quad\text{for}\quad  u= u^j \frac{\partial}{\partial x_j},
\end{equation*}
where $\eps_{ij}$ are the Levi-Civita symbols, see Appendix~\ref{Appendix-K}.
It will be shown in the Appendix that $K$ rotates vector fields on the sphere $\bS^2_a$
 counterclockwise on the sphere about an axis that is normal to the tangent plane at a chosen point, with outward pointing orientation.

\medskip
The combined effect of the term $(-2\omega \cos \phi) Ku$ shows that tangent vectors $u$ are deflected to the right on the northern hemisphere 
(and to the left on the southern), while this effect vanishes on the equator and is strongest at the poles, due to the factor $\cos \phi$.

\medskip
In the following, we will derive in a rather informal way a characterization of all equilibrium states for  \eqref{NS-Coriolis}.  
A justification for all the steps  and conclusions can be found in the Appendix, see in particular
Appendix~\ref{Appendix-Killing} and Appendix~\ref{Appendix-K}.
We would like to acknowledge that, in addressing the Coriolis force, we benefited from the insights provided in the very nice paper \cite{SaTu20}.

\medskip
Suppose $(u,\pi)$ is a (sufficiently smooth) equilibrium solution of \eqref{NS-Coriolis}. 
Multiplying the first equation in \eqref{NS-Coriolis} with $u$ and integrating over $\sM$ yields
\begin{equation*}
\begin{aligned}
0=\int_\sM (\nabla_u u -\mu_s  (\Delta u + \kappa  u) + Cu +  \gd\pi | u)_g \,d\mu_g 
=  2\mu_s \int |D_u|^2_g \,d\mu_g,
\end{aligned}
\end{equation*}
where we used the metric property of the Levi-Civita connection, $\dv u=0$ and the divergence theorem  to conclude that
$$
\int_\sM (\nabla_ u u | u)_g \,d\mu_g = \frac{1}{2}\int_\sM  \nabla_u (u | u)_g\, d\mu_g =0,
\qquad \int_\sM ( \gd\pi | u)_g \,d\mu_g =0.
$$
In addition, we used that $(Cu |u )_g=0$.
The condition $\| D_u \|_g =0$ implies $D_u=0$, and hence $u$ is a Killing field. 
This in turn yields 
$$
\nabla_u u = -\frac{1}{2} \gd (u | u)_g .
$$
Since  $( \Delta  + \kappa) u = 2\,\dv D_u$, we have  $( \Delta  + \kappa) u =0$ . Therefore, we are left with the equation
\begin{equation*}
%\label{equilibrium-equation*}
-\frac{1}{2} \gd (u | u)_g + Cu + \gd \pi =0.
\end{equation*}
Applying $\rot$ (here we follow an argument given in \cite{SaTu20}) to this equation and using the abbreviation $f= -2 \omega \cos \phi$ and the fact
that $\rot (\gd h)=0$ for any scalar function $h$, we infer that
\begin{equation*}
\rot (Cu) = \dv ( f K^2 u) = -\dv ( f u) = -f  \dv u + (\gd f |u )_g = (\gd f |u )_g  =0.
\end{equation*}
Using spherical coordinates, the  equation $(\gd f |u )_g  =0$ and the fact that $u$ is a Killing field yields 
$$u=u_*:= c \frac{\partial}{\partial \theta},$$ 
where $c$ is a constant and $\theta$ denotes the longitude.
Hence, $u$ is a rotation around the $z$-axis.
With this expression for $u$, we infer that in spherical coordinates
$$
Cu_* = \gd h_*,\quad \text{where}\  h_*= - a^2 c\,\omega \cos^2 \phi.
$$  
This yields the equilibrium pressure 
$$
\pi_* = \frac{1}{2}\left ( c^2 a^2 \sin ^2 \phi + 2 c a^2 \omega \cos^2\phi\right),
$$
where $c$ is a constant.
In summary, the set $\cE_*$ of equilibrium states for \eqref{NS-Coriolis} is given in spherical coordinates by
\begin{equation}
\label{equilibria}
\cE_*= \left\{(u_*,\pi_*): u_* = c \frac{\partial}{\partial\theta},\ \pi_* = \frac{1}{2}\left ( c^2 a^2 \sin ^2 \phi + 2 ca^2 c\,\omega \cos^2 \phi\right)\right\} ,
\end{equation}
where $c$ is a constant.

\medskip\noindent
Here we would like to paraphrase our main result. 
A more precise statement is provided in Theorem~\ref{thm: global}.
%%%%%%%
\begin{theorem}
\label{thm; global-informal}
For every divergence free initial value $u_0$ in $L_2(\sM; T\sM) $,  equation~\eqref{NS-Coriolis} admits a unique global solution.
$u(t)$ converges at an exponential rate to an equilibrium 
$ u_*= c \frac{\partial}{\partial \theta}$  for some $c$ in $\R$ in the topology of $H^2_2(\sM; T\sM)$ as~$t\to \infty$.

\end{theorem}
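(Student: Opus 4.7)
My approach adapts the framework of \cite{SiWi22} for surface Navier--Stokes and treats the new Coriolis term $Cu$ as a bounded skew-symmetric perturbation. I set up \eqref{NS-Coriolis} as a quasilinear evolution equation on the Helmholtz-projected phase space $\PH L_2(\sM;T\sM)$ with principal part $A := -\mu_s\PH(\Delta+\kappa)$, which generates an analytic semigroup with maximal $L_p$-regularity on $\bS^2_a$. Since $K$ is pointwise orthogonal and $|\cos\phi|\le 1$, the operator $C$ is bounded on $L_2$ and satisfies $(Cu|u)_{L_2}=0$, so it is a lower-order perturbation not affecting semigroup generation or the contraction argument for the quadratic nonlinearity $\nabla_u u$; this yields a unique local strong solution. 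Testing \eqref{NS-Coriolis} against $u$, the convective, Coriolis and pressure terms all integrate to zero and we recover the usual energy identity
\[
\tfrac12 \tfrac{d}{dt}\|u\|_{L_2}^2 + 2\mu_s\|D_u\|_{L_2}^2 = 0,
\]
yielding $\|u(t)\|_{L_2}\le\|u_0\|_{L_2}$ and $\|D_u\|_{L_2}^2\in L^1(0,\infty)$; the continuation criterion from the maximal-regularity theory on the two-dimensional manifold $\bS^2_a$ rules out finite-time blow-up, and the local solution extends globally.

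Parabolic smoothing gives precompactness of $\{u(t):t\ge 1\}$ in $H^2_2(\sM;T\sM)$. The $L^1$-integrability of $\|D_u\|_{L_2}^2$ yields a sequence $t_n\to\infty$ along which $u(t_n)\to u_\infty$ in $H^2_2$ with $D_{u_\infty}=0$, so $u_\infty$ is a Killing field. Since $u_\infty$ lies in the $\omega$-limit set, where the Lyapunov function $\|u\|_{L_2}^2$ is constant, the forward trajectory starting from $u_\infty$ consists of Killing fields and is hence stationary for \eqref{NS-Coriolis}; the $\rot$-argument reproducing \eqref{equilibria} then forces $u_\infty = c_\infty\frac{\partial}{\partial\theta}$, so the $\omega$-limit set lies in $\cE_*$.

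Exponential convergence to a single equilibrium follows from the generalized principle of linearized stability for normally hyperbolic equilibria, as in \cite{SiWi22} and \cite{SSW25}. Linearizing \eqref{NS-Coriolis} at $u_* = c\frac{\partial}{\partial\theta}$ yields
\[
\cL v := \PH\bigl(-\mu_s(\Delta+\kappa)v + Cv + \nabla_{u_*}v + \nabla_v u_*\bigr),
\]
and, since $u_*$ is Killing and $C$ is skew-symmetric, $(\cL v|v)_{L_2}=2\mu_s\|D_v\|_{L_2}^2\ge 0$; hence $\sigma(\cL)\subset\{\mathrm{Re}\,\lambda\ge 0\}$. The principal new ingredient beyond \cite{SiWi22} --- and the main technical obstacle --- is verifying normal hyperbolicity: $0$ is a semi-simple eigenvalue with eigenspace exactly $\R\frac{\partial}{\partial\theta}$ (the tangent space of $\cE_*$ at $u_*$), and no other eigenvalue lies on the imaginary axis. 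Any purely-imaginary eigenvector must satisfy $D_v=0$ and therefore be Killing, reducing the question to the three-dimensional Killing subspace; the characterization \eqref{equilibria} identifies $\frac{\partial}{\partial\theta}$ as the only Killing vector in $\ker\PH C$, and a direct computation must confirm that $\PH Cv$ leaves the Killing subspace for every non-zero Killing $v$ orthogonal to $\frac{\partial}{\partial\theta}$, so that no non-zero imaginary eigenvalue of $\cL$ arises. Semi-simplicity of the zero eigenvalue then follows from $\cL^*\bigl(\frac{\partial}{\partial\theta}\bigr)=0$ via the Fredholm alternative. With normal hyperbolicity in hand, the generalized principle of linearized stability provides an $H^2_2$-neighborhood of $\cE_*$ within which every solution converges exponentially to a nearby equilibrium $c\frac{\partial}{\partial\theta}$, and the $\omega$-limit analysis above guarantees that the orbit enters this neighborhood in finite time.
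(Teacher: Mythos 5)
Your strategy --- energy identity, global existence via the two--dimensional continuation criterion, compactness of the orbit, identification of the $\omega$-limit set with equilibria, and then the generalized principle of linearized stability at a normally stable equilibrium --- is the route of \cite{PSW20} and \cite{SiWi22} for the problem without Coriolis force, and it is genuinely different from the proof given in this paper. Here, no linearization or spectral analysis is performed: one subtracts $u_*=\cP_{\cE}u_0$, proves in Proposition~\ref{pro: orthogonal-global}(a) that the $\cE$-component of the solution is \emph{conserved in time} (this uses that $Cz$ is a gradient for $z\in\cE$, Lemma~\ref{u-star}), and then closes the single differential inequality $\frac{d}{dt}\|v\|_{L_2}^2=-4\mu_s\|D_v\|_{L_2}^2\le -C\|v\|_{H^1_2}^2$ by Korn's inequality on the orthogonal complement $V^1_2$. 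That yields global existence, a globally valid (not merely local) exponential rate, and the explicit identification of the limit as $\cP_{\cE}u_0$, none of which your argument produces.

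More importantly, your proof has a genuine gap exactly at the point where the Coriolis term matters. Both of your decisive dynamical steps rest on one fact that you state but do not prove. First, in the $\omega$-limit argument you assert that an orbit in the $\omega$-limit set ``consists of Killing fields and is hence stationary.'' That implication is not automatic: for Killing data the equation collapses to $\partial_t u=-\PH Cu$, since $(\Delta+\kappa)u=2\,\dv D(u)=0$ and $\PH\nabla_u u=-\tfrac12\PH\gd|u|^2_g=0$, and $\PH C$ is skew-adjoint on $L_{2,\sigma}(\sM;T\sM)$, so this flow preserves the $L_2$-norm. If $\PH C$ mapped the three-dimensional space of Killing fields into itself, the $\omega$-limit set could contain a non-stationary periodic orbit of precessing rotations and the theorem would in fact be false. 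Second, in the linearization step you yourself write that ``a direct computation must confirm that $\PH Cv$ leaves the Killing subspace for every non-zero Killing $v$ orthogonal to $\partial/\partial\theta$'' --- which is precisely the same fact, needed both to exclude nonzero purely imaginary eigenvalues of $\cL$ and to show $N(\cL)=\R\,\partial/\partial\theta$, i.e.\ normal stability. This is not a routine loose end: it is the single place where the $z$-axis is singled out and the reason the conclusion holds, and it must be carried out (for instance by expressing $v=e_1\times x\vert_{\bS^2_a}$ in spherical coordinates, computing $Cv$ from \eqref{Cu-spherical}, and checking that its divergence-free part violates \eqref{Killing-local}). Until it is supplied, neither the inclusion of the $\omega$-limit set in $\cE_*$ nor the exponential convergence is established. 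I would add that this burden is not an artifact of your approach: in any proof one must use that the Coriolis term genuinely couples the rotations about the $x$- and $y$-axes to the dissipative part of the dynamics; in your argument this is the deferred computation, while in the paper's argument the corresponding information enters through the applicability of Korn's inequality on $V^1_2$ and the conservation law of Proposition~\ref{pro: orthogonal-global}(a), which holds only for $z\in\cE$ and not for general Killing fields.

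A minor additional point: for $u_0$ merely in $L_{2,\sigma}(\sM;T\sM)$ one cannot launch a strong $L_p$-$L_q$ solution directly by a contraction argument; $L_2$ is the critical trace space in the \emph{weak} (extrapolated) setting, so the local theory requires the framework of Theorem~\ref{weak-strong-L2} (via \cite{PSW18}) before the instantaneous regularization you invoke becomes available. Your sketch is compatible with this but glosses over it.
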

Hence, the Coriolis force eventually aligns each solution to a rotation about the $z$-axis.
This result seems rather surprising, as any solution will eventually line up with a rotation about the $z$-axis, no matter in what way it starts out.
In our previous paper \cite{PSW20}, we had shown that in the absence of the Coriolis effect, solutions with initial data in $L_2$ will converge to a rotation about
an axis $ (\omega_1, \omega_2, \omega_3).$ 

\medskip
The motion of fluids on a surface (or a manifold) has attracted attention by many researchers in recent years,
see for instance \cite{CCD17, Czu24, EbMa70, JOR18, KLG17,OQRY18, ReZh13, ReVo18, SSW25, SaTu20, SiWi22}
and the additional references listed in these publications.

\medskip\noindent
We will analyze the system \eqref{NS-Coriolis} by using methods from semi-group theory, maximal regularity, and interpolation-extrapolation spaces.
We would be remiss not to mention the profound influence Giuseppe Da Prato had on the development of this area.
 He was  a pioneer, laying the groundwork for many foundational concepts and advancing the theoretical framework that continues to shape modern 
 research in this area. In particular,  the publication \cite{DaPG75}  laid the groundwork for the  functional analytic approach to maximal regularity.

\medskip\noindent
In Section \ref{sec:Well-posed} we prove that for any initial value $u_0\in L_{2,\sigma}(\sM; T\sM)$ there exists a unique weak solution of \eqref{NS-Coriolis}. By means of the theory of critical spaces, developed e.g. in \cite{PSW18}, it is shown that the solution regularizes instantaneously to a strong $L_p$-$L_q$-solution of \eqref{NS-Coriolis}. 
In Section~\ref{global} we provide a precise statement and a proof of Theorem~\ref{thm; global-informal}.

Finally, in appendices A through E we collect and prove results concerning Riemannian manifolds, Killing vector fields, the rotation operator $K$,
the divergence theorem, the existence of the Helmholtz projection, Korn's inequality, and the $H^\infty$-calculus. 

%%%%%%%%%%%%%%%
\section{Mathematical approach}\label{sec:Well-posed}
To analyze \eqref{NS-Coriolis},
we introduce the {\em surface Helmholtz projection}, defined by
$$
\PH u= u-\gd \psi_u, \quad u\in L_q(\sM; T\sM),
$$
where $\gd\psi_u \in L_q(\sM; T\sM)$ is the unique solution of
$$
(\gd \psi_u | \gd \phi)_{\sM}=(u|\gd \phi)_{\sM} ,\quad \forall \phi\in \dot{H}^1_{q'}(\sM),
$$
%with $q'$ being the H\"older conjugate of $q$,
cf. Proposition~\ref{pro: Helmholtz}.
Here,
$$
 (u | v)_\sM:=  \int_\sM (u|v)_g \, d\mu_g,\quad (u,v)\in L_q(\sM;T\sM)\times L_{q^\prime}(\sM; T\sM),
$$
denotes the  duality pairing between $L_q(\sM; T\sM)$ and $L_{q^\prime}(\sM; T\sM)$, where $\mu_g$ is the volume form induced by $g$.
We note that  in case $q=2$,  the pairing $(\cdot | \cdot )_\sM$ defines an inner product on $L_2(\sM; T\sM)$.

\medskip\noindent
With these preparations, we can introduce the function spaces used in this article
\begin{equation}
\label{spaces}
\begin{aligned}
L_{q,\sigma}(\sM;T\sM):&= \PH L_q(\sM;T\sM) \\
H^s_{q,\sigma}(\sM;T\sM): &= H^s_q(\sM;T\sM) \cap L_{q,\sigma}(\sM;T\sM)  \\
H^{-s}_{q,\sigma}(\sM;T\sM): &=  (H^s_{q',\sigma}(\sM;T\sM))' \\
\end{aligned}
\end{equation}
for   $-1\le s \le 1$ and $1<p,q<\infty$,
where the respective duality parings
\begin{equation*}
\begin{aligned}
&\la \cdot | \cdot \ra_\sM:  H^{-s}_{q,\sigma}(\sM;T\sM)\times H^s_{q',\sigma}(\sM;T\sM)) \to \bR, \\
\end{aligned}
\end{equation*}
are induced by $(\cdot | \cdot )_\sM$.
We would like to point out that our definition of the `negative' spaces $H^{-s}_q$ 
differ from the usual definition in case $-s< -1/{q'}$. This allows for a more streamlined presentation of our results.
Note that
\begin{equation*}
\la u | v \ra_\sM = (u |v)_\sM \quad\text{in case}\quad (u,v)\in L_q(\sM;T\sM)\times L_{q^\prime}(\sM; T\sM).
\end{equation*}
Now we can define the {\em surface Stokes operator}  $A: X_1\to X_0$, by
\begin{equation*}
%\label{Stokes}
 Au:= -\mu_s \PH (\Delta  u + \kappa u )
\end{equation*}
with $X_0:= L_{q,\sigma}(\sM;T\sM)$ and
$X_1:=\sD(A):= H^2_{q,\sigma}(\sM;T\sM).$

\medskip\noindent
Equation~\eqref{NS-Coriolis} is equivalent to the equation
\begin{equation}
\label{NS-strong}
 \partial_t u + Au + Bu  = F(u), \quad u(0)=u_0,
\end{equation}
where
\begin{equation}
Bu:=  \PH Cu, \quad F(u): =  \PH \nabla_u u,
\end{equation}
see Remark \ref{rem: q-pressure}.
We have the the following result.
%%%%%%%%%%%
\begin{theorem}
\label{thm: H-strong}
There exists a number $\eta_0>0$ such that 
the operator $\eta + A+B$ admits a bounded $H^\infty$-calculus on $X_0$ with $H^\infty$-angle $<\pi/2$  for all $\eta>\eta_0$. 
\end{theorem}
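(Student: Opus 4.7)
The plan is to view $\eta + A + B$ as a lower-order perturbation of the shifted surface Stokes operator $\eta + A$ and then to invoke a perturbation theorem for the bounded $H^\infty$-calculus. This reduces the task to two essentially independent inputs: (i) a bounded $H^\infty$-calculus with angle $<\pi/2$ for $\eta + A$ on $X_0$, and (ii) boundedness of $B$ on $X_0$, so that $B$ has $A$-bound zero.

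For (i), I would invoke the appendix on the $H^\infty$-calculus, which handles the surface Stokes operator $A$ by localization and freezing of coefficients in the spirit of \cite{PSW18, SiWi22}: after a sufficiently large shift $\eta_1 \geq 0$, the operator $\eta_1 + A$ is sectorial on $X_0 = L_{q,\sigma}(\sM;T\sM)$ and admits a bounded $H^\infty$-calculus of some angle $\theta < \pi/2$. For (ii), I would check that $B = \PH C$ belongs to $\mathcal{L}(X_0)$. Indeed, $K$ acts fibrewise on $T\sM$ as a rotation by $\pi/2$ (with $K^2 = -I$), hence is a pointwise isometry and extends to a bounded operator on $L_q(\sM;T\sM)$. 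Multiplication by the smooth bounded function $-2\omega\cos\phi$ preserves $L_q(\sM;T\sM)$, and $\PH$ is bounded on $L_q(\sM;T\sM)$ by Proposition~\ref{pro: Helmholtz}. Combining these facts yields $B\in\mathcal{L}(X_0)$, so $B$ is $A$-bounded with relative bound zero.

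With both ingredients at hand, the conclusion follows from a standard bounded-perturbation theorem for the $H^\infty$-calculus (Kalton--Weis type; see e.g.\ the version stated in \cite{PSW18}): if the unperturbed operator has bounded $H^\infty$-calculus with angle $<\pi/2$ and the perturbation has $A$-bound zero, then a sufficiently large shift restores sectoriality and preserves the bounded $H^\infty$-calculus with the same angle. Quantitatively, starting from the sectorial estimate
$$
\|(\lambda + \eta + A)^{-1}\|_{\mathcal{L}(X_0)} \leq \frac{M}{|\lambda| + \eta - \eta_1}
$$
on a sector of opening angle $\pi-\theta$, I would choose $\eta_0 \geq \eta_1$ so large that $\|B(\lambda + \eta + A)^{-1}\|_{\mathcal{L}(X_0)} \leq 1/2$ uniformly on that sector for every $\eta>\eta_0$. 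A Neumann series then supplies a uniformly bounded resolvent for $\eta + A + B$ on the same sector, and the $H^\infty$-bound for $\eta+A+B$ is obtained by expanding $f(\eta+A+B)$ through the Dunford integral, writing $(\lambda+\eta+A+B)^{-1}$ as the above Neumann series, and estimating term-by-term against the $H^\infty$-bound of $\eta + A$.

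The principal obstacle I anticipate is maintaining the $H^\infty$-angle strictly below $\pi/2$ after adding the non-selfadjoint, non-accretive perturbation $B$: a bounded perturbation can, in general, move spectrum outside a narrow sector near the origin, and $B$ itself has no sign or symmetry structure to exploit. This is precisely what the shift $\eta > \eta_0$ remedies, pushing the spectrum of $\eta+A+B$ uniformly away from the imaginary axis so that the perturbative argument above operates in a regime where the resolvent decay of $\eta+A$ dominates $\|B\|$ throughout the sector.
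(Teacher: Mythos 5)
Your proposal is correct and takes essentially the same route as the paper: the paper's proof consists of the single observation that $B$ is a lower-order perturbation of the Stokes operator $A$, after which it cites Theorem 3.1.5 and Corollary 3.3.15 of \cite{PrSi16} --- precisely the $H^\infty$-calculus for $\eta+A$ plus the shift-and-perturb argument you spell out. Your verification that $B=\PH C\in\mathcal{L}(X_0)$ and the Neumann-series estimate are exactly the content hidden in those citations.
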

%%%%%%%%%%
\begin{proof}
As the operator $B$ is a lower order perturbation of the Stokes operator $A$ the assertion
follows from Theorem 3.1.5 and Corollary 3.3.15 in \cite{PrSi16}.
\end{proof}
%%%%%%%%%%%%
\subsection{Weak setting} 
\bigskip
Here we are interested in studying equation \eqref{NS-strong} in a weak setting, so as to be able to admit initial data $u_0\in L_{2,\sigma}(\sM, T\sM)$.
This will be done by casting \eqref{NS-strong}  in an extrapolation setting.
Let 
$$A_0=\eta+A+B, \quad \eta>\eta_0, $$
 and recall that $X_0=L_{q,\sigma}(\sM; T\sM)$. 
By \cite[Theorems V.1.5.1 and V.1.5.4]{Ama95}, the pair $(X_0,A_0)$ generates an interpolation-extrapolation scale $(X_\alpha,A_\alpha)$, $\alpha\in\mathbb{R}$, with respect to the complex interpolation functor. Note that for $\alpha\in (0,1)$, $A_\alpha$ is the $X_\alpha$-realization of $A_0$ (the restriction of $A_0$ to $X_\alpha$) and
$$X_\alpha = D(A_0^\alpha)=[X_0,X_1]_\alpha=H_{q,\sigma}^{2\alpha}(\sM; T\sM), $$
since $A_0$ admits a bounded $H^\infty$-calculus.

Let $X_0^\sharp:=(X_0)'$ and $A_0^\sharp:=(A_0)'$ with $D(A_0^\sharp)=:X_1^\sharp$. Then $(X_0^\sharp,A_0^\sharp)$ generates an interpolation-extrapolation scale $(X_\alpha^\sharp,A_\alpha^\sharp)$, the dual scale, and by \cite[Theorem V.1.5.12]{Ama95}, it holds that
$$(X_\alpha)'=X^\sharp_{-\alpha}\quad\text{and}\quad (A_\alpha)'=A^\sharp_{-\alpha}$$
for $\alpha\in \mathbb{R}$.
Choosing $\alpha=1/2$ in the scale $(X_\alpha,A_\alpha)$, we obtain an operator
$$A_{-1/2}:X_{1/2}\to X_{-1/2},$$
where
$X_{-1/2}=(X_{1/2}^\sharp)'$ (by reflexivity) and, since also $A_0^\sharp$ has a bounded $H^\infty$-calculus,
$$X_{1/2}^\sharp = D((A_0^\sharp)^{1/2})=[X_0^\sharp,X_1^\sharp]_{1/2}
=H_{q',\sigma}^1(\sM; T\sM),$$
with $q'=q/(q-1)$ being the conjugate exponent to $q\in (1,\infty)$.  Moreover, we have $A_{-1/2}=(A_{1/2}^\sharp)'$ and $A_{1/2}^\sharp$ is the restriction of $A_0^\sharp$ to $X_{1/2}^\sharp$. Thus, the operator
$$A_{-1/2}:X_{1/2}\to X_{-1/2}$$
inherits the property of a bounded $H^\infty$-calculus with $H^\infty$-angle $<\pi/2$ from $A_0$,
see Theorem~\ref{thm: H-strong}.

Since $A_{-1/2}$ is the closure of $A_0$ in $X_{-1/2}$ it follows that $A_{-1/2}u=A_0u$ for $u\in X_1=D(A_0)=H_{q,\sigma}^2(\sM; T\sM)$ and thus, for all $v\in X_{1/2}^\sharp$, it holds that
$$
\la A_{-1/2}u,v\ra_\sM=(A_0u|v)_{\sM}=2\mu_s (D_u | D_v)_\sM  +(B u |v)_\sM + \eta (u|v)_\sM,
$$
see Proposition~\ref{pro: divergence thm}.
Using that $X_1$ is dense in $X_{1/2}$, we obtain the identity
$$
\la A_{-1/2}u,v\ra_\sM=2\mu_s (D_u | D_v)_\sM  +(B u |v)_\sM + \eta (u|v)_\sM,
$$
valid for all $(u,v)\in X_{1/2}\times X_{1/2}^\sharp$. 
We call the operator $A^{\sf w}: X_{1/2} \to X_{-1/2}$, given by the representation
$$
\la A^{\sf w}u,v\ra_\sM=2\mu_s( D_u | D_v)_\sM , \quad (u,v)\in X_{1/2}\times X_{1/2}^\sharp,
$$
the \emph{weak Stokes operator} on $\sM .$

Multiplying \eqref{NS-strong} by a function $\phi\in X_{1/2}^\sharp= H_{q',\sigma}^1(\sM; T\sM)$,
we obtain the {\em weak formulation} of \eqref{NS-Coriolis} 
\begin{equation}
\label{NS-weak}
\partial_t u+A ^{\sf w}u + B^{\sf w} u=F^{\sf w}(u),\quad u(0)=u_0,
\end{equation}
in $X_{-1/2}$, where for all for  $(u,\phi)\in H^1_{q,\sigma}(\sM;T\sM) \times H^1_{q',\sigma}(\sM;T\sM)$
$$
\la B^{\sf w} u, \phi \ra_\sM =  (B u | \phi)_\sM,\quad 
\la F^{\sf w}(u),\phi\ra_\sM=(u \otimes u_\flat  | \nabla \phi )_\sM .
$$
Here we used Proposition~\ref{pro: divergence thm}.  
Note that $u$ is a solution of \eqref{NS-weak} if and only if $u$ solves
\begin{equation}
\label{NS-weak-omega}
\partial_t u + \eta u+A ^{\sf w}u + B^{\sf w} u=F^{\sf w}(u)+ \eta u,\quad u(0)=u_0,
\end{equation}
in $X_{-1/2}$.
% Since $A_{-1/2}=\omega+A_{S,\sM}^{\sf w}$ admits a bounded $H^\infty$-calculus in $X_{-1/2}$ with $H^\infty$-angle $\phi_{A_{-1/2}}^\infty<\pi/2$, 
From the definition of $B$, it follows that
\begin{align*}
\left|  ( B u | \phi)_{\sM} \right|  & \le C  \|u\|_{L_q(\sM)} \|\phi\|_{L_{q'}(\sM)},
\end{align*}
for all $(u,\phi)\in L_{q}(\sM)\times L_{q'}(\sM)$, hence $B^\sw \in \cL(L_{q,\sigma}(\sM;T\sM),  X_{-1/2} )$ is a lower order perturbation of $A^{\sw}$.
We may therefore apply \cite[Corollary~3.3.15]{PrSi16}, to conclude that for some sufficiently large $\eta_0>0$
\begin{equation}
\label{H-infty-weak}
\eta + A^{\sw}+B^{\sw} \in H^\infty( X_{-1/2}) \text{ with $H^\infty$-angle } < \pi/2 \quad \text{for all  }\eta >\eta_0.
\end{equation}
We are now ready to state the main result of this paper concerning existence and uniqueness of solutions for 
 \eqref{NS-weak}.
%%%%%%%%%%
\begin{theorem}
\label{weak-strong-L2}
For any $u_0\in L_{2,\sigma}(\sM; T\sM)$, problem \eqref{NS-weak} admits a unique solution
\begin{equation*}
u\in H^1_{2}((0, a), H^{-1}_{2,\sigma}(\sM; T\sM))\cap L_{2}((0, a);H^1_{2,\sigma}(\sM; T\sM))
\end{equation*}
for some $a=a(u_0)>0$. The solution exists on a maximal time interval $[0,t^+(u_0))$.
In addition, we have
\begin{equation*}
%\label{continuous-H1}
u\in  C([0,t^+); L_{2,\sigma}(\sM; T\sM))
\end{equation*}
with $t^+=t^+(u_0)$. 
Furthermore, each solution satisfies
\begin{equation*}
u\in H^1_{p,{\rm loc}}((0, t^+); L_{q,\sigma}(\sM; T\sM))\cap L_{p,{\rm loc}}((0, t^+) ; H^2_{q,\sigma}(\sM; T\sM))
\end{equation*}
for any fixed $p,q\in (1,\infty)$.
Therefore, any solution with initial value $u_0\in L_{2,\sigma}(\sM; T\sM)$ regularizes instantaneously and becomes a strong $L_p$-$L_q$ solution {\color{violet}of \eqref{NS-strong}}.
\end{theorem}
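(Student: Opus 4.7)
The plan is to apply the critical spaces framework of \cite{PSW18} to the weak formulation \eqref{NS-weak-omega}, viewed as a semilinear Cauchy problem in the extrapolated base space $X_{-1/2}=H^{-1}_{2,\sigma}(\sM;T\sM)$. The linear part $\eta+A^{\sw}+B^{\sw}$ admits a bounded $H^\infty$-calculus of angle $<\pi/2$ on $X_{-1/2}$ by \eqref{H-infty-weak}, and in particular enjoys maximal $L_2$-regularity. The natural maximal-regularity class
\[
\mathbb{E}_1(J):= H^1_2(J;X_{-1/2})\cap L_2(J;X_{1/2})
\]
has temporal trace $(X_{-1/2},X_{1/2})_{1/2,2}=L_{2,\sigma}(\sM;T\sM)$ at $t=0$, so $u_0\in L_{2,\sigma}$ lies exactly in the trace space, corresponding to the critical time weight $\mu_c=1$ in the terminology of \cite{PSW18}.

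The analytic core is a bilinear estimate for $F^{\sw}$. From the identity $\la F^{\sw}(u),\phi\ra_\sM=(u\otimes u_\flat|\nabla\phi)_\sM$ and H\"older's inequality one obtains $\|F^{\sw}(u)\|_{X_{-1/2}}\lesssim \|u\|_{L_4(\sM)}^2$. The two-dimensional Ladyzhenskaya inequality $\|u\|_{L_4(\sM)}\le C\|u\|_{L_2(\sM)}^{1/2}\|u\|_{H^1_2(\sM)}^{1/2}$, combined with the trace embedding $\mathbb{E}_1(J)\hookrightarrow C(\bar J;L_{2,\sigma})$, then yields $\|u\|_{L_4(J;L_4(\sM))}\le C\|u\|_{\mathbb{E}_1(J)}$, and polarization gives the critical bilinear bound
\[
\|F^{\sw}(u)-F^{\sw}(v)\|_{L_2(J;X_{-1/2})}\le C\bigl(\|u\|_{\mathbb{E}_1(J)}+\|v\|_{\mathbb{E}_1(J)}\bigr)\|u-v\|_{\mathbb{E}_1(J)}.
\]
A standard contraction mapping in a small ball of $\mathbb{E}_1(0,a)$, with $a=a(u_0)>0$ chosen so that the homogeneous solution is small in the $L_2(J;X_{1/2})$-component, then produces a unique local solution. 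The usual continuation procedure yields a maximal interval $[0,t^+)$, and the continuity $u\in C([0,t^+);L_{2,\sigma})$ follows from the trace embedding applied on each compact subinterval.

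For the instantaneous regularization to a strong $L_p$-$L_q$ solution, I would combine parabolic smoothing with a bootstrap. Since $u\in L_2(J;H^1_{2,\sigma})$, arbitrarily small $t_0\in(0,t^+)$ can be chosen with $u(t_0)\in H^1_{2,\sigma}(\sM;T\sM)$, which on the two-dimensional compact sphere embeds into the trace class $B^{2-2/p}_{q,p,\sigma}(\sM;T\sM)$ of the strong $L_p$-$L_q$ setting for $p,q$ slightly above $2$. Solving \eqref{NS-strong} forward from $t_0$ via Theorem~\ref{thm: H-strong} and the critical spaces theory yields a strong solution which, by uniqueness within the weak class, coincides with $u$; iterating this step, each round upgrading the time-integrability and spatial integrability by a notch, reaches any prescribed $p,q\in(1,\infty)$.

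The principal obstacle is criticality: the space $L_{2,\sigma}$ of admissible data coincides with the critical trace space, so the bilinear estimate above is borderline and the life span $a(u_0)$ cannot be read off from a mere norm bound on $u_0$; it must instead be extracted from the quantitative decay of the linear evolution in the $L_2(J;X_{1/2})$-component as $a\to 0^+$. Handling this rigorously is precisely what the critical spaces framework of \cite{PSW18} is designed for, and the appeal to it seems essential here.
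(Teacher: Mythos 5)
Your proposal follows essentially the same route as the paper: both cast \eqref{NS-weak-omega} in the extrapolated space $X_{-1/2}$, invoke the bounded $H^\infty$-calculus \eqref{H-infty-weak} together with the critical-spaces theorem of \cite{PSW18} with critical weight $\mu_c=1$ and trace space $L_{2,\sigma}(\sM;T\sM)$, establish the quadratic estimate for $F^{\sw}$ via the embedding into $L_4$ (the paper's $H^{2\beta-1}_q\hookrightarrow L_{2q}$ with $q=2$), and obtain the $L_p$-$L_q$ regularization by bootstrapping into the strong setting of Theorem~\ref{thm: H-strong}. You merely unpack the contraction-mapping and Ladyzhenskaya details that the paper delegates to \cite[Theorem 1.2]{PSW18} and to \cite[Theorem 4.5]{SiWi22}, so no substantive difference.
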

%%%%%%%%%%%
\begin{proof}
We will apply Theorem 1.2 in \cite{PSW18} to \eqref{NS-weak-omega} with the choice
 $$X_0^{\sf w}=X_{-1/2}\quad \text{and}\quad X_1^{\sf w}=X_{1/2}.$$
 For that purpose, we will first characterize some relevant interpolation spaces.
%Let
% \begin{equation}
% \label{spaces}
% \begin{aligned}
% &H^s_{q,\sigma}(\sM; T\sM):=
% \left\{
% \begin{aligned}
%&H^s_q(\sM; T\sM)\cap L_{q,\sigma}(\sM; T\sM), & \qquad  0\le s \le 1,\\
%&\big(H^{-s}_{q',\sigma}(\sM; T\sM)\big)',   & \ \ \qquad -1\le s<0,
% \end{aligned}
% \right. 
% \end{aligned}
% \end{equation}
In \cite[Section 3.5]{PSW20} we determined the complex interpolation spaces $[X_0^{\sf w}, X_1^{\sf w}]_\theta$ as
\begin{equation}
\label{interpolation}
 \begin{aligned}
 & [X_0^{\sf w}, X_1^{\sf w}]_\theta     \ \, =\  H^{2\theta -1}_{q,\sigma}(\sM; T\sM),\quad && \theta \in (0,1).
\end{aligned}
 \end{equation}
Next we show that the nonlinearity
$F_\sM^{\sf w}:X_\beta^{\sf w} \to X_{0}^{\sf w} $
is well defined, where
$X^{\sf w}_\beta:=[X_0^{\sf w},X_1^{\sf w}]_{\beta} $ for $\beta\in (1/2,1).$
By \eqref{spaces}, \eqref{interpolation} and Sobolev embedding, we have
\begin{equation}
\label{embedding}
X_\beta^{\sf w} \hookrightarrow
H_q^{2\beta-1}(\sM; T\sM) \hookrightarrow L_{2q}(\sM; T\sM),
\end{equation}
provided that $2\beta-1\ge \frac{1}{q}$ (recall that $\dim\sM=2$).
From now on, we assume $2\beta-1=\frac{1}{q}$, which means $q>1$ as $\beta<1$.
Then, by H\"older's inequality and \eqref{embedding}, we obtain
$$|\la F^{\sf w}(u),\phi\ra_\sM|\le \|u\|_{L_{2q}(\sM)}^2 \|\phi\|_{H^1_{q'}(\sM)}
\le C \|u\|^2_{X_\beta^{\sf w}}\,\|\phi\|_{H^1_{q'}(\sM)},$$
showing that
$$F^{\sf w}: X_\beta^{\sf w} \to X^{\sf w}_0\quad\text{with}\quad  \|F_\sM^{\sf w}(u)\|_{X_0^{\sf w}}\le C \|u\|^2_{X^{\sf w}_\beta}.$$
 Thanks to \eqref{H-infty-weak} we can now employ \cite[Theorem 1.2]{PSW18}.
 Indeed, according to \cite{PSW18}, the critical weight $\mu_c$ and the corresponding critical trace space $X_{\gamma,\mu_c}^{\sf w}$ in the weak setting are given by 
$$
X_{\gamma,\mu_c}^{\sf w}=(X_0^{\sf w},X_1^{\sf w})_{\mu_c-1/p,p}, \quad \mu_c:=\mu^{\sf w}_c=1/p+2\beta-1=1/p+1/q,
$$
respectively. In the special case $p=q=2$, this reduces to $\mu_c=1$ and
\begin{equation*} 
%\label{weak-trace-space}
X_{\gamma,\mu_c}^{\sf w}= (X_0^{\sf w},X_1^{\sf w})_{1/2,2}=[X_0^{\sf w},X_1^{\sf w}]_{1/2}=L_{2,\sigma}(\sM; T\sM).
\end{equation*}
The validity of the second assertion can be seen as in the proof of \cite[Theorem 4.5]{SiWi22}, taking into account that, by Theorem \ref{thm: H-strong},
 the (shifted) operator $A+B$ in \eqref{NS-strong} has a bounded $H^\infty$-calculus in $X_0=L_{q,\sigma}(\sM;\mathsf{T}\sM)$ with $H^\infty$-angle $<\pi/2$.
\end{proof}
%%%%%%%%%
\begin{remark}
\label{rem: q-pressure}
 {\rm (a)} 
Even though the choice $u_0\in L_{2,\sigma}(\sM; T\sM)$ asks for  $(p,q)=(2,2)$, 
proving the regularization of solutions requires the $H^\infty$-calculus  and estimates for the nonlinearity for $q\in (1,\infty)$.

\smallskip\noindent
{\rm (b)} Once the solution $u$ is known, the pressure $\pi$ can be recovered.
Indeed, suppose $u$ solves the equation
$$
\partial_t u + \PH (\nabla_u u -\mu_s \Delta u + Cu)=0, \quad t\in (0, t^+(u_0)).
$$
Then by definition of $\PH$, 
$$\PH (\nabla_u u -\mu_s \Delta u + Cu)=  (\nabla_u u -\mu_s \Delta u + Cu)- \gd \psi_v,$$
where $\psi_v$ solves
$$(\gd \psi_v|\gd \phi)_\sM=(v|\gd \phi)_\sM,\quad \phi\in \dot{H}_{q'}^1(\sM),$$
with $v=  (\nabla_u u -\mu_s \Delta u + Cu)$.
Setting $\pi = -\psi_v$, we see that $(u,\pi)$ is a solution of~\eqref{NS-Coriolis}.
\end{remark}

%%%%%%%%%%%%%%%%%
\section{Global existence and convergence}\label{global}
We remind that 
$$\cE=\left\{c\frac{\partial}{\partial \theta} : c\in\R \right\}$$
is the set of all equilibrium velocities for \eqref{NS-Coriolis}, respectively \eqref{NS-strong}.
We have the following interesting relation.
%%%%%%%%%
\begin{lemma}
\label{Null-weak}
$\sN(A^{\sw} +B^{\sw}) =\cE.$
\end{lemma}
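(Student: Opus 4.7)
The plan is to establish the two inclusions separately, reusing the formal equilibrium computation already carried out in the introduction.

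The easy direction $\cE\subseteq\sN(A^\sw+B^\sw)$ amounts to unwinding the definitions. Given $u=c\,\partial/\partial\theta$, $u$ is a Killing field, so $D_u=0$ and the defining identity $\la A^\sw u,v\ra_\sM=2\mu_s(D_u|D_v)_\sM$ immediately yields $A^\sw u=0$. The equilibrium calculation in the introduction shows $Cu=\gd h_\ast$ with $h_\ast=-a^2c\omega\cos^2\phi$, whence $Bu=\PH Cu=0$ and therefore $B^\sw u=0$ as well.

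For the reverse inclusion, my first step is to upgrade an element $u\in\sN(A^\sw+B^\sw)\subseteq H^1_{q,\sigma}(\sM;T\sM)$ to a smooth tangent field via an elliptic bootstrap applied to the stationary equation $A^\sw u=-B^\sw u$ (here $B$ is a zeroth-order perturbation of the elliptic operator $A$). Once $u$ is smooth it lies, in particular, in $X_{1/2}^\sharp=H^1_{q',\sigma}(\sM;T\sM)$, and I may test the identity against $v=u$ to obtain
\[
0=\la(A^\sw+B^\sw)u,u\ra_\sM=2\mu_s\|D_u\|_{L_2(\sM)}^2+(Bu|u)_\sM.
\]
Since $u$ is divergence free, $(Bu|u)_\sM=(\PH Cu|u)_\sM=(Cu|u)_\sM$, and the pointwise identity $(Ku|u)_g=0$ (the operator $K$ being rotation by $\pi/2$ in each tangent plane, by Appendix~\ref{Appendix-K}) forces $(Cu|u)_\sM=0$. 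Hence $D_u=0$, so $u$ is a Killing field on $\bS^2_a$.

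The remaining task, pinning down \emph{which} Killing field $u$ must be, is the main obstacle, and it mirrors the derivation of $\cE_\ast$ in the introduction. Because $D_u=0$ gives $A^\sw u=0$, it follows that $B^\sw u=0$, so $\PH Cu=0$ and $Cu=\gd h$ for some scalar $h$. Taking $\rot$, using $\rot\circ\gd=0$, and invoking the identity $\rot(Cu)=(\gd f|u)_g$ with $f=-2\omega\cos\phi$ already derived in the introduction, I conclude $(\gd f|u)_g\equiv 0$. In spherical coordinates $\gd f$ is a nonzero multiple of $\partial/\partial\phi$ away from the poles, so $u$ must be tangent to every parallel. Writing a Killing field on $\bS^2_a$ as $u=\omega_1R_1+\omega_2R_2+\omega_3R_3$ with $R_j$ the rotation about the $j$-th coordinate axis (so $R_3=\partial/\partial\theta$) and observing that $R_1,R_2$ carry nonzero $\partial/\partial\phi$-components at generic points, I obtain $\omega_1=\omega_2=0$, hence $u\in\cE$.
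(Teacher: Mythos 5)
Your proof is correct and follows essentially the same route as the paper: the energy identity against $v=u$ forces $D_u=0$, vanishing of $B^\sw u$ then gives $\PH Cu=0$, applying $\rot$ yields $(\gd f\,|\,u)_g=0$, and the converse inclusion is the same unwinding of definitions. The only cosmetic differences are your elliptic bootstrap (not needed in the $q=2$ setting where $u$ is already an admissible test function) and your use of the $\mathfrak{so}(3)$ basis of Killing fields in the final identification step, where the paper instead computes directly with the Killing equations in spherical coordinates (Lemma~\ref{u-star}).
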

%%%%%%%%%%%
\begin{proof}
Pick $u\in \sN(A^{\sw} + B^{\sw})$.
Then
$$0= \la  (  A^{\sw} + B^{\sw})u | u \ra_{\sM} = (D_u | D_u)_\sM + (\PH Cu |u)_M = \| D_u \|^2_{L_2(\sM)},$$
where we employed \eqref{PH-symmetric}, the fact that $\PH u = u$, and \eqref{K-orthogonal} to conclude 
$$ (\PH Cu |u)_g= (Cu |\PH u)=  (Cu | u)_g=0.$$
Hence $D_u=0$, and this shows that $u$ is a Killing field, see \eqref{Killing-equivalent}.
Let $v\in H^1_{2,\sigma}(\sM; T\sM)$ be given. Then we have
\begin{equation*}
0= \la (A^{\sw} + B^{\sw})u |v\ra_\sM = (D_u | D_v)_\sM + (\PH Cu |v)_\sM = (\PH Cu |v)_\sM .
\end{equation*}
Since this is true for any $v\in H^1_{2,\sigma}(\sM ; T\sM)$, by density 
 of $H^1_{2,\sigma}(\sM; T\sM)$ in $L_{2,\sigma}(\sM; T\sM)$ we infer that $\PH Cu=0$, and hence 
$$Cu=\gd h$$ 
for some function $h$.
Applying ${\sf rot}$ to this relation and employing \eqref{K-twice}, \eqref{K-rot} yields 
$$
{\sf rot } (Cu) = \dv (f K^2 u)= - \dv (f u) = - (\gd f | u)_g =0.
$$
We can now infer from Lemma~\ref{u-star} that $u=c\frac{\partial}{\partial_\theta}$ for some $c\in\R$ , 
and hence $u\in\cE$.
This shows that $\sN(A^{\sw} + B^{\sw})\subset \cE$.

\medskip\noindent
Conversely, suppose that  $u\in \cE$. Then it follows from Lemma~\ref{u-star} and the fact   $u$ is a Killing vector field  that 
$
-\mu_s \PH (\Delta u + \kappa  u) + \PH Cu =0,
$
that is, $(A+B)u=0$. Therefore, $(\Aw + B^\sw)u=0$ as well. 
\end{proof}

\noindent
Let  $u_*\in \cE$ be given. Then  we consider the evolution equation

\begin{equation}
\label{NS sys abstract-aux-weak}
\left\{\begin{aligned}
\partial_t v + (A^{\sw} + B^{\sw}) v &=G^{\sw}_*(v),    && t>0 ,\\
v(0) & = v_0, &&
\end{aligned}\right.
\end{equation}
where
$$\langle G^{\sw}_*(v) | \phi\ra_\sM = (  u_* \otimes v_\flat + v\otimes (u_*)_\flat + v\otimes v_\flat | \nabla \phi)_\sM  
$$
for all  $ \phi\in  H^1_{2,\sigma}(\sM; T\sM)$.
Its strong counterpart is given by 
\begin{equation}
\label{NS sys abstract-aux}
\left\{\begin{aligned}
\partial_t v+ (A+B) v &=G_*(v),     && t>0 ,\\
v(0) & = v_0, &&
\end{aligned}\right.
\end{equation}
where $G_*(v):=-\PH ( \nabla_v u_* + \nabla_{u_*} v  + \nabla_v v)$. 

\medskip\noindent
The relation between $G^{\sw}_*(v)$ and $G_*(v)$ is justified by \eqref{div-uv-sigma} and 
Proposition~\ref{pro: divergence thm} (b)(ii).

\goodbreak
\medskip\noindent
Suppose $u$ is a solution of \eqref{NS-weak}. Then  $v= u-u_*$ satisfies
\begin{equation*}
\begin{aligned}
0 &= \la \partial_t v + (A^{\sw} + B^{\sw})v + (A^{\sw} + B^{\sw})u_* - G^{\sw}_*(v) -\nabla_{u_*} u_* |  \phi  \ra_\sM  \\
& = \la \partial_t v + (A^{\sw} + B^{\sw})v - G^{\sw}_*(v) |  \phi  \ra_\sM 
\end{aligned}
\end{equation*}
for  $\phi \in H^{1}_{2,\sigma}(\sM; T\sM)$.
Here we used  Lemma~\ref{Null-weak} to conclude that  $(A^{\sw} + B^{\sw})u_*=0$ and Lemma~\ref{lem: Killing-grad} as well as 
Proposition~\ref{pro: divergence thm} to infer
$$\la \nabla_{u_*} u_* | \phi  \ra_\sM  = (\nabla_{u_*} u_* |  \phi   )_\sM  = \frac{1}{2} (\gd | u_* |^2_g\, | \phi)_\sM =0.$$
This shows that $u$ is a solution of \eqref{NS-weak}  with initial value $u_0$ iff 
 $v= u-u_*$ is a solution of  \eqref{NS sys abstract-aux-weak} with initial value $v_0=u_0-u_*$.
 An analogous statement holds for strong solutions.

\medskip\noindent
Hence, it follows from Theorem~\ref{weak-strong-L2}  that for  every $v_0\in L_{2,\sigma}(\sM;T\sM)$, equation \eqref{NS sys abstract-aux-weak} 
admits a unique solution
\begin{equation}
\label{v-solution-weak-star}
v\in H^1_2((0,t^++); H^{-1}_{2,\sigma}(\sM;T\sM))\cap L_2((0,t^+); H^1_{2,\sigma}(\sM;T\sM))
\end{equation}
for some $t^+=t^+(u_0)>0$.
The solution exists on a maximal time interval $[0,t^+(v_0))$.
In addition, it holds that
\begin{equation}
\label{2D-regularization}
v\in H^1_{p,loc}((0,t^+); L_{q,\sigma}(\sM;T\sM))\cap L_{p,loc}((0,t^+); H^2_{q,\sigma}(\sM;T\sM))
\end{equation}
 for any fixed $p,q\in (1,\infty)$,  and $v$ also solves  \eqref{NS sys abstract-aux}.

\medskip\noindent
With the convention that $H^0_{2,\sigma}(\sM;T\sM):=L_{2,\sigma}(\sM;T\sM)$, we set
\begin{equation}
\label{E-orth}
V^j_2=\{v\in H^j_{2,\sigma}(\sM;T\sM): \, ( v|z)_{\sM}=0 \quad \forall z\in \cE\}, \quad j=0,1.
\end{equation}
As $\cE$ is one-dimensional,  $V^j_2$ is a closed subspace of $H^j_{2,\sigma}(\sM;T\sM)$ and
\begin{equation}
\label{direct sum}
H^j_{2,\sigma}(\sM;T\sM)= \cE \oplus V^j_2, \quad j=0,1,
\end{equation}
by a similar argument to \cite[Remark~4.10(a)]{SiWi22}.

Next we show that any solution of \eqref{NS sys abstract-aux-weak} with an initial value $v_0\in L_{2,\sigma}(\sM)$ that is orthogonal to $\cE$ remains orthogonal
for all later times. Moreover, we establish an energy estimate for such solutions.
\goodbreak
%%%%%%%%%%%
\begin{proposition}
\label{pro: orthogonal-global}
 Given $v_0\in V^0_2$,  let $v$ be the unique solution of \eqref{NS sys abstract-aux-weak}. Then
\begin{itemize}
\item[(a)] $v(t)\in V^1_2$ for all $t\in (0,t^+(v_0))$;
\vspace{1mm}
\item[(b)] there exists a constant $C>0$ such that
%for some $C>0$ and all $t\in (0,t^+(u_0))$
\begin{equation}
\label{integral ineq}
\|v(t)\|_{ L_2(\sM) }^2 + C\int_0^t \|v(s)\|_{H^1_2(\sM)}^2\, ds \leq \|v_0\|_{ L_2(\sM) }^2,\quad t\in (0,t^+(v_0)) ;
\end{equation}
\item[(c)] $t^+ (v_0)=+\infty$. Moreover, there exists a constant $\alpha>0$ such that
\begin{equation}
\label{exp growth}
\|v(t)\|_{ L_2(\sM) }\leq e^{-\alpha t} \|v_0\|_{ L_2(\sM) },\quad t\ge 0.
\end{equation}
\end{itemize}
\end{proposition}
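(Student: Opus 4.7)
The whole argument rests on testing the evolution equation \eqref{NS sys abstract-aux-weak} against suitable functions and exploiting two structural facts: first, that the Coriolis potential $Cu_*$ is a gradient (shown in the introduction and used in Lemma~\ref{Null-weak}), and second, that for any Killing field $z$ the tensor $\nabla z$ is antisymmetric, so that the symmetric product $w\otimes w_\flat$ is annihilated against it. To prove (a), I would fix an arbitrary $z\in \cE$ and compute $\frac{d}{dt}(v(t)|z)_\sM$ by inserting $\phi=z\in H^1_{2,\sigma}(\sM;T\sM)$ in the weak formulation. The Stokes contribution $\la A^\sw v, z\ra_\sM = 2\mu_s(D_v|D_z)_\sM$ vanishes because $D_z=0$. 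For the Coriolis contribution, $\la B^\sw v, z\ra_\sM = (Cv|z)_\sM$; using the antisymmetry $(Ku|v)_g = -(u|Kv)_g$ from \eqref{K-orthogonal} together with the fact from Lemma~\ref{Null-weak} that $Cz$ is a gradient, this pairs to zero against the divergence-free field $v$. Finally $\la G^\sw_*(v),z\ra_\sM=0$ follows from the identity $(\nabla_w w|z)_\sM=0$ valid for all divergence-free $w$ and Killing $z$, applied once to $w=u_*+v$ and once to $w=u_*$ and subtracted. Hence $(v(t)|z)_\sM$ is constant; as $v_0\in V_2^0$ it is zero for all $t$, and combined with \eqref{2D-regularization} this yields $v(t)\in V_2^1$ on $(0,t^+(v_0))$.

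For (b), I would test \eqref{NS sys abstract-aux-weak} with $\phi=v(t)\in H^1_{2,\sigma}(\sM;T\sM)$, which is justified for a.e.\ $t$ thanks to the regularity \eqref{v-solution-weak-star}. This yields
\begin{equation*}
\tfrac{1}{2}\tfrac{d}{dt}\|v\|_{L_2}^2 + 2\mu_s\|D_v\|_{L_2(\sM)}^2 + \la B^\sw v,v\ra_\sM = \la G^\sw_*(v),v\ra_\sM .
\end{equation*}
The Coriolis term vanishes by \eqref{K-orthogonal}, and the nonlinear term vanishes by exactly the antisymmetry argument used in the derivation of the equilibria: each of $(\nabla_v u_*|v)_\sM$, $(\nabla_{u_*}v|v)_\sM$, $(\nabla_v v|v)_\sM$ is zero, the first because $\nabla(u_*)_\flat$ is antisymmetric and $v\otimes v_\flat$ is symmetric, the last two because $\dv u_*=\dv v=0$ allows writing them as $\tfrac{1}{2}(\nabla_{\cdot}|v|_g^2|1)_\sM=0$. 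Thus we obtain the differential identity
\begin{equation*}
\tfrac{d}{dt}\|v\|_{L_2}^2 + 4\mu_s\|D_v\|_{L_2(\sM)}^2 = 0 .
\end{equation*}
Since $v(t)\in V_2^1$ by (a), Korn's inequality (Appendix) combined with a Poincar\'e-type estimate on the complement of the Killing fields gives $\|v\|_{H^1_2(\sM)}^2 \le C\|D_v\|_{L_2(\sM)}^2$. Integrating in time produces \eqref{integral ineq}.

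For (c), the $L_2$-bound $\|v(t)\|_{L_2(\sM)}\le \|v_0\|_{L_2(\sM)}$ coming from (b) says that the solution stays bounded in the critical trace space $X_{\gamma,\mu_c}^\sw=L_{2,\sigma}(\sM;T\sM)$ identified in the proof of Theorem~\ref{weak-strong-L2}. The standard blow-up/extension criterion from the critical spaces theory of \cite{PSW18} (compare the argument in \cite[Theorem 4.5]{SiWi22}) then forces $t^+(v_0)=+\infty$. For the exponential decay I use once more that $v(t)\in V_2^1$: the Korn--Poincar\'e inequality above implies $\|v\|_{L_2(\sM)}^2\le C_P\|D_v\|_{L_2(\sM)}^2$, and plugging this into the energy identity yields
\begin{equation*}
\tfrac{d}{dt}\|v\|_{L_2(\sM)}^2 + \tfrac{4\mu_s}{C_P}\|v\|_{L_2(\sM)}^2 \le 0,
\end{equation*}
whence Gronwall gives \eqref{exp growth} with $\alpha=2\mu_s/C_P$.

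The main obstacle is the verification that $\la G^\sw_*(v),z\ra_\sM=0$ and, more generally, that the testing manipulations in (a) and (b) are genuinely permitted in the weak setting where $v$ only lies in $H^1_{2,\sigma}(\sM;T\sM)$ for a.e.\ $t$; this requires carefully invoking the regularization \eqref{2D-regularization} together with the divergence-theorem identities of Proposition~\ref{pro: divergence thm} to pass from the tensor pairing $(u_*\otimes v_\flat+v\otimes(u_*)_\flat+v\otimes v_\flat|\nabla\phi)_\sM$ to the covariant form $-(\nabla_v u_*+\nabla_{u_*}v+\nabla_v v|\phi)_\sM$. Everything else is a clean consequence of Killing-antisymmetry and the spectral-gap provided by Korn plus orthogonality to $\cE$.
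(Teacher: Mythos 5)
Your argument follows the paper's proof essentially step for step: parts (a) and (b) are obtained by testing against $z\in\cE$ and against $v$ itself, and the structural facts you use (vanishing of $(D_v|D_z)_\sM$ for Killing $z$; the chain $(Cv|z)_\sM=-(v|Cz)_\sM=-(v|\gd h)_\sM=0$; the Killing/divergence-free cancellation of the convective terms; Korn's inequality on $V^1_2$) are exactly the ones the paper uses. Your polarization trick for $\la G^{\sw}_*(v),z\ra_\sM$ --- applying $(\nabla_w w|z)_\sM=0$ to $w=u_*+v$ and to $w=u_*$ and subtracting --- is a slightly tidier packaging of the same computation. Two small slips: the antisymmetry $(Ku|v)_g=-(u|Kv)_g$ is \eqref{Kuv}, not \eqref{K-orthogonal}, and the fact that $Cz$ is a gradient for $z\in\cE$ comes from Lemma~\ref{u-star}.

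The one step that, as written, does not go through is the justification of $t^+(v_0)=+\infty$ in (c). You attribute it to the solution remaining \emph{bounded} in the critical trace space $X^{\sw}_{\gamma,\mu_c}=L_{2,\sigma}(\sM;T\sM)$. Boundedness in a critical trace space is in general not a valid continuation criterion in the theory of \cite{PSW18} --- that is precisely the delicacy of critical spaces; the continuation results there require more than a uniform bound in the critical norm (e.g.\ relative compactness, boundedness with a strictly subcritical weight, or time-integrability of a stronger norm). The criterion that actually applies, and the one the paper invokes via \cite[Theorem~2.4]{PSW18}, is that $v\in L_2((0,t^+(v_0));H^1_{2,\sigma}(\sM;T\sM))$, which is exactly what your integrated energy inequality \eqref{integral ineq} provides. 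So the needed ingredient is already present in your part (b); you only have to cite the correct blow-up criterion rather than boundedness in the critical space. The rest of (c) (Korn plus Gronwall) is fine.
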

%%%%%%%%%%
\begin{proof}
(a) Pick any  $z\in \cE$.

In the sequel,  we assume that $t\in (0, t^+(v_0))$ is fixed, and we then suppress the time variable and simply write $v$ in lieu of $v(t)$.
We have
\begin{equation}
\label{weak-star-z}
\begin{aligned}
\la \partial_t v | z\ra_\sM = -\la (A^{\sw} + B^{\sw}) v | z\ra_\sM + \la G^{\sw}_*(v) |z\ra_\sM .
\end{aligned}
\end{equation}
As solutions immediately regularize, see \eqref{2D-regularization}, we may replace the weak formulation \eqref{weak-star-z}
by its strong counterpart 
\begin{equation}
\label{strong-star-z}
\begin{aligned}
(\partial_t v | z)_\sM 
&= -((A + B) v | z)_\sM + (G_*(v) |z)_\sM \\
&=  - 2\mu_s (D_v | D_z)_\sM - (C v |z)_{\sM}  +   (G_*(v) |z)_\sM.
\end{aligned}
\end{equation}
Here we employed \eqref{PH-symmetric} and the fact that  $\PH z =z$ to obtain
$$(B v |z)_\sM = (\PH Cv | z)_{\sM} = (C v  | z)_\sM.$$
 For the same reason, we can conclude that
\begin{equation*}
%\label{weak formulation Navier 4}
 \langle G_*(v) | z \rangle_{\sM}
=  -(   ( \nabla_{u_*} v | z)_{\sM} + ( \nabla_v u_* | z)_{\sM} + (\nabla_v v | z)_{\sM}.
\end{equation*}
As $z$ is a Killing field, we know that $D_z=0$, see \eqref{Killing-equivalent}, and hence,
$$2\mu_s (D_v | D_z)_\sM =0.$$
Next we show that 
$(Cv |z)_\sM =0. $
Indeed, using \eqref{Kuv}, we have 
$(Cv | z)_g = - (v | Cz)_g.$
As $z\in \cE$, it follows from Lemma~\ref{u-star} that $Cz= \gd h$ for some function $h$.
Hence, 
$$(Cv | z)_\sM = - (v | Cz)_\sM = - (v | \gd h)_\sM =0,$$
where we used \eqref{div-scalar} and the fact that $\dv v=0$ in the last step.

\medskip\noindent
Finally, we claim that $(G_*(v) | z)_\sM=0$. To see this,
 we employ the metric property of the connection to obtain
\begin{align*}
  ( \nabla_{u_*} v | z)_g + ( \nabla_v u_* | z)_g
=    \nabla_{u_*}(  v | z)_g + \nabla_v (  u_* | z)_g  - (  v | \nabla_{u_*} z)_g - (  u_* | \nabla_v z)_g .
\end{align*}
Since $z$ is a Killing vector field, we infer that
\begin{equation*}
(  v | \nabla_{u_*} z)_g + (  u_* | \nabla_v z)_g =0,
\end{equation*}
see \eqref{Killing-def}.
Meanwhile,  Proposition~\ref{pro: divergence thm}  implies
$$
\int_\sM  \left[  \nabla_{u_*}(  v | z)_g + \nabla_v (  u_* | z)_g  \right] \, d\mu_g
 =0.
$$
Using the metric property once more,  we observe that
$$(\nabla_v v | z)_g = \nabla_v (v|z)_g - (v | \nabla_v z)_g. $$
Similar arguments as above yield $(\nabla_v v | z)_\sM =0$.
In summary,  we have shown that
$$
\la \partial_t v (t) | z \ra_{\sM} = \partial_t ( v (t)| z)_{\sM}=0,\quad t\in  (0,t^+(v_0)).
$$
Hence $( v (t)  | z)_{\sM}=0$ and $v(t) \in V^1_2$ for all $t\in [0,t^+(v_0))$.

\medskip\noindent
(b)  Due to  \eqref{2D-regularization}, $v$ is a valid test function in \eqref{NS sys abstract-aux}.
Suppressing the time variable $t\in (0, t^+(v_0))$ we obtain by analogous arguments as in 
\eqref{weak-star-z} and \eqref{strong-star-z}
\begin{equation*}
\la \partial_t v | v\ra_\sM 
=  - 2\mu_s (D_v | D_v)_\sM - (C v |v)_{\sM}  +   (G_*(v) |v)_\sM.
\end{equation*}
If readily follows from \eqref{K-orthogonal} that  $(C v |v)_{\sM}=0$. 
Observing that
\begin{equation*}
\begin{aligned}
- ( G_*(v) | v)_\sM
& =   ( \nabla_{u_*} v | v)_{\sM} + ( \nabla_v u_* | v)_{\sM} + (\nabla_v v | v)_{\sM} \\
& = \frac{1}{2} \nabla_{u_*} |v|^2_g +  (\nabla_v u_* |v)_g + \frac{1}{2} \nabla_v |v|^2_g
\end{aligned}
\end{equation*}
and using the fact that $u_*$ is a Killing  field, $\dv v=\dv u_*=0$ in conjunction with \eqref{div-scalar} implies
$
(G_*(v) | v)_{\sM}=0.
$
We thus have
\begin{equation}
\label{integral ineq 2}
\frac{d}{dt} \|v(t)\|_{ L_2(\sM) }^2 =   -4\mu_s \| D_v\|_{ L_2(\sM) }^2  \le   -C \|v\|_{H^1_2(\sM)}^2 ,
\end{equation}
where the last step follows from  Korn's inequality, cf. Lemma~\ref{Appendix Lem: korn}.
Integrating both sides with respect to time gives \eqref{integral ineq}.

\medskip\noindent
(c) Part (b) shows that
$$
v\in L_2((0,t^+(v_0)); H^1_{2,\sigma} (\sM;T\sM)) .
$$
It follows from \cite[Theorem~2.4]{PSW18} that $t^+(v_0)=+\infty$.
An immediate consequence of \eqref{integral ineq 2}  is
$$
\frac{d}{dt} \|v(t)\|_{ L_2(\sM) }^2 + C \|v\|_{L _2(\sM)}^2  \leq 0 ,\quad \forall t>0.
$$
Solving the above ordinary differential inequality  gives \eqref{exp growth}.
\end{proof}

 We are in a position to state and prove our main theorem.
%%%%%%%%%%%%
\begin{theorem}(Global existence) \\
\label{thm: global}
For every $u_0\in L_{2,\sigma}(\sM;T\sM)$, the unique solution $u$ to \eqref{NS-weak}
with initial value $u_0$ 
\begin{itemize}
\item exists globally and enjoys the regularity properties listed in Theorem~\ref{weak-strong-L2}.
\vspace{1mm}
\item For any fixed $q\in (1,\infty)$, $u$ converges to the equilibrium
$$u_* = \cP_{\cE} u_0 = c\frac{\partial}{\partial \theta}$$  
for some $c\in\R$ in the topology of $H^2_{q,\sigma}(\sM;T\sM)$  at an exponential rate as $t\to \infty$,
where $\cP_{\cE}$ denotes the orthogonal projection from $L_{2,\sigma}(\sM;T\sM)$ onto~$\cE$.
\end{itemize}
\end{theorem}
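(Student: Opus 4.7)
The plan is to translate the problem to a perturbation around a stationary state in $\cE$ and then combine Proposition~\ref{pro: orthogonal-global} with the critical-spaces / maximal-regularity argument from \cite[Theorem~4.12]{SiWi22}. Using the direct sum decomposition \eqref{direct sum} for $j=0$, write $u_0 = u_* + v_0$ where $u_* := \cP_\cE u_0 \in \cE$ and $v_0 \in V^0_2$. The calculation immediately preceding \eqref{v-solution-weak-star} shows that $u$ is the solution of \eqref{NS-weak} with initial value $u_0$ if and only if $v := u - u_*$ is the solution of \eqref{NS sys abstract-aux-weak} with initial value $v_0$. Since $u_* \in \cE$ is stationary, the convergence statement $u(t) \to u_*$ reduces to showing $v(t) \to 0$ in $H^2_{q,\sigma}$ at an exponential rate.

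Applying Proposition~\ref{pro: orthogonal-global} to $v$ yields global existence, the invariance $v(t) \in V^1_2$ for $t > 0$, and the exponential bound $\|v(t)\|_{L_2(\sM)} \le e^{-\alpha t}\|v_0\|_{L_2(\sM)}$. Combined with the instantaneous smoothing from Theorem~\ref{weak-strong-L2}, this gives $v \in C((0,\infty); H^2_{q,\sigma}(\sM;T\sM))$ for every $q \in (1,\infty)$, so that $v$ satisfies the strong equation \eqref{NS sys abstract-aux} pointwise on $(0,\infty)$ and the first bullet of the theorem is settled.

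To promote the $L_2$-decay to an exponential $H^2_q$-convergence, I would work with the linearization of \eqref{NS sys abstract-aux} at the origin, namely $\mathcal{L} := A + B + T$, where $T v := \PH(\nabla_{u_*} v + \nabla_v u_*)$ is a bounded lower-order perturbation of $A+B$; the quadratic remainder is $Q(v,v) := -\PH \nabla_v v$. The identity $(G_*(v) | v)_\sM = 0$ established in the proof of Proposition~\ref{pro: orthogonal-global}(b), together with Korn's inequality (Lemma~\ref{Appendix Lem: korn}), furnishes the coercivity $(\mathcal{L}v | v)_\sM \geq c \|v\|_{H^1_2(\sM)}^2$ on $V^0_2$ for some $c>0$. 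Theorem~\ref{thm: H-strong} and \cite[Corollary~3.3.15]{PrSi16} then imply that the restriction of $\mathcal{L}$ to $V^0_2$ admits a bounded $H^\infty$-calculus with angle $<\pi/2$ and spectrum contained in $\{\operatorname{Re} z \geq c\}$.

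Fix $\beta \in (0, c)$ and set $w(t) := e^{\beta t} v(t)$. The equation for $w$ then reads
\begin{equation*}
\partial_t w + (\mathcal{L} - \beta) w \;=\; e^{-\beta t}\, Q(w,w),
\end{equation*}
and $\mathcal{L} - \beta$ is invertible on $V^0_2$ and admits maximal $L_p$-regularity on $\R_+$. By the $L_2$-decay of $v$ and the instantaneous regularization, for any $\eta > 0$ one can choose $T > 0$ so large that $v(T)$ has norm less than $\eta$ in the critical trace space $X_{\gamma,\mu_c} = H^{2-2/p}_{q,\sigma}$. A fixed-point argument in the weighted maximal regularity class on $[T,\infty)$, executed exactly as in the proof of \cite[Theorem~4.12]{SiWi22} using the critical-spaces framework of \cite{PSW18}, then yields $w \in L_p([T,\infty); H^2_{q,\sigma}) \cap H^1_p([T,\infty); L_{q,\sigma})$ with uniform bound. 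Inserting this into the identity $\mathcal{L} v = -\partial_t v + Q(v,v)$ and invoking the invertibility of $\mathcal{L}|_{V^0_2}$ upgrades the trace-space decay to the desired pointwise exponential bound in $H^2_{q,\sigma}$. I expect the main obstacle to be exactly this final upgrade: extracting a pointwise $H^2_q$ estimate (rather than just convergence in $X_{\gamma,\mu_c}$) while keeping the exponential weight. This is handled by choosing $p$ large enough and, if needed, applying the same scheme to the $t$-derivative of the equation to control $\partial_t v$ pointwise in $L_q$.
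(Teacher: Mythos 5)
Your proposal follows essentially the same route as the paper: decompose $u_0=u_*+v_0$ via \eqref{direct sum} with $u_*=\cP_\cE u_0$, reduce to the perturbation equation \eqref{NS sys abstract-aux-weak}, invoke Proposition~\ref{pro: orthogonal-global} for global existence and exponential $L_2$-decay, and then upgrade to exponential convergence in $H^2_{q,\sigma}$ by the weighted maximal-regularity argument of \cite{SiWi22} (the paper simply cites \cite[Theorem~4.9]{SiWi22} for this last step, which your sketch of the linearization, coercivity via Korn's inequality, and the $e^{\beta t}$-weighted fixed-point argument fills in plausibly). No substantive gap.
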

%%%%%%%%%%%%%%
\begin{proof}
In view of  \eqref{direct sum}, we can decompose $u_0$ into $u_0=u_* + v_0$ such that $v_0\in V^0_2$.
Let $v$ be the (unique) solution to
\begin{equation*}
\left\{\begin{aligned}
\partial_t v + (A^{\sw} + B^{\sw}) v &=G^{\sw}_*(v),     && t>0 ,\\
v(0) & = v_0 .&&
\end{aligned}\right.
\end{equation*}
By Proposition~\ref{pro: orthogonal-global}, $v$ exists globally. Then it follows from the considerations preceding the
Proposition that  
$$
u(t) = u_*+v(t),\quad t>0,
$$
is the unique global solution of \eqref{NS-weak} with initial value $u_0$.
As was proved in Proposition~\ref{pro: orthogonal-global}
$$
\|u(t)-u_*\|_{ L_2(\sM) } = \|v(t) \|_{ L_2(\sM) } \leq e^{-\alpha t}\|v_0 \|_{ L_2(\sM) }
=e^{-\alpha t}\|u_0 - u_* \|_{ L_2(\sM) },\quad t>0,
$$
for some $\alpha>0$.
The convergence in the stronger topology $H^2_{q,\sigma}(\sM;T\sM)$ can be proved in the same way as in \cite[Theorem~4.9]{SiWi22}.
\end{proof}
%%%%%%%%%%%%%%%%%%%%%%%%%%%%%%%%%%%%%%%%%%%%%%%%%%%%%%
%%%%%%%%%%%%%

\goodbreak

\appendix
%%%%%%%%%%%%%%%%%%%%%%%%%%%%%%%%%%%%%%%%

\section{Tensor bundles and the Levi-Civita connection}\label{Appendix A}

%%%%%%%%%%%%%%%%%%%%%%%%%%%%%%%%%%%%%%%%
In this subsection, we introduce some concepts and results from differential geometry that are used throughout the manuscript. 
Here, we consider the general situation of an $n$-dimensional Riemannian manifold.
 
Let $\sM$ be a compact, smooth $n$-dimensional Riemannian manifold without boundary, $n\ge 2$, and let $(\cdot | \cdot)_g$ denote the Riemann metric on $\sM$.
In the following, we use the same notation as in~\cite{Ama13}.

\medskip
$T{\sM}$ and $T^{\ast}{\sM}$ denote the tangent and the cotangent bundle of ${\sM}$, respectively,
and $T^{\sigma}_{\tau}{\sM}:=T{\sM}^{\otimes{\sigma}}\otimes{T^{\ast}{\sM}^{\otimes{\tau}}}$ stands for the $(\sigma,\tau)$-tensor bundle
of $\sM$ for $\sigma,\tau\in \bN$.
%For abbreviation, we set $V^{\sigma}_{\tau}:=\{T^{\sigma}_{\tau}\sM, (\cdot|\cdot)_g\} $.
The notations $\Gamma(\sM ;T^{\sigma}_{\tau}{\sM})$ and $\mathcal{T}^{\sigma}_{\tau}{\sM}$ stand  for the set of all sections of   $T^{\sigma}_{\tau}{\sM}$ and the $C^{\infty}({\sM})$-module of all smooth sections of $T^{\sigma}_{\tau}\sM$, respectively.
For abbreviation, we put $\mathbb{J}^{\sigma}:=\{1,2,\ldots,n\}^{\sigma}$, and $\mathbb{J}^{\tau}$ is defined alike.

Given local coordinates $ \{x^1,\ldots,x^n\}$,
$$(i):=(i_1,\cdots,i_{\sigma})\in\bJ^{\sigma},\quad (j):=(j_1,\cdots,j_{\tau})\in\bJ^{\tau},$$
we set
\begin{align*}
\frac{\partial}{\partial{x}^{(i)}}:=\frac{\partial}{\partial{x^{i_1}}}\otimes\cdots\otimes\frac{\partial}{\partial{x^{i_{\sigma}}}}, \hspace*{.5em} dx^{(j)}:=dx^{j_1}\otimes{\cdots}\otimes{dx}^{j_{\tau}}.
\end{align*}
Suppose that $a\in \Gamma(\sM; T^\sigma_\tau \sM) $.
The local representation of $a$ with respect to these coordinates is given by
\begin{align*}
a=a^{(i)}_{(j)} \frac{\partial}{\partial{x}^{(i)}} \otimes dx^{(j)}, \hspace{1em}\text{ with } a^{(i)}_{(j)}: U_k \to \bK,
\end{align*}
where  $U_k\subset  \sM$  is a coordinate patch.

%%%%%%%%%%%%%%%%%%%%%%%%%%%%%%%%%%%%%%%%
%contraction
%%%%%%%%%%%%%%%%%%%%%%%%%%%%%%%%%%%%%%%%
For $s\in \{1,\ldots, \sigma \}$,  $t\in \{1,\ldots, \tau\}$ and $a\in \Gamma(\sM; T^\sigma_\tau \sM)$,
${\sf C}^s_t (a)\in \Gamma(\sM; T^{\sigma-1}_{\tau-1} \sM)$ denotes the contraction of  $a$ with respect to the $(s,t)$-position.
This means that  in a local representation of $a$,
$$
a=a^{(i_1,\dots, i_s,\dots, i_\sigma)}_{(j_1,\dots, j_t,\dots, j_\tau)}
\frac{\partial}{\partial x^{i_1}}\otimes\cdots \otimes \frac{\partial}{\partial x^{i_s}}\otimes \cdots \otimes \frac{\partial}{\partial x^{i_\sigma}}
\otimes dx^{j_1}\otimes \cdots \otimes dx^{j_t}\otimes \cdots \otimes dx^{j_\tau},
$$
the terms $\frac{\partial}{\partial x^{i_s}}$ and $dx^{j_t}$ are deleted and
$
a^{(i_1,\dots, i_s,\dots, i_\sigma)}_{(j_1,\dots, j_t,\dots, j_\tau)}$ is replaced by $ a^{(i_1,\dots, k,\dots, i_\sigma)}_{(j_1,\dots, k,\dots, j_\tau)}$,
and the sum convention is used for $k$.

%%%%%%%%%%%%%%%%%%%%%%%
%Lowering, rising
%%%%%%%%%%%%%%%%%%%
\medskip
For $a\in\Gamma(\sM; T^\sigma_{\tau}\sM)$, $\tau\ge 1$, $a^\sharp \in\Gamma(\sM; T^{\sigma+1}_{\tau-1}\sM)$ is defined by
\begin{equation*}
a^\sharp := g^\sharp a:={\sf C}^{\sigma+2}_1(a \otimes   g^{*} ),
\end{equation*}
and for $a\in\Gamma(\sM; T^\sigma_{\tau}\sM)$, $\sigma\ge 1$,  $a_\flat \in\Gamma(\sM; T^{\sigma-1}_{\tau+1}\sM)$
is defined by
\begin{equation*}
a_\flat := g_\flat a:={\sf C}^{\sigma }_1( g\otimes  a  ).
\end{equation*}

%%%%%%%%%%%%%%%%%%%%%%%%%%%%%%%%%%%%%%%%
%Transpose
%%%%%%%%%%%%%%%%%%%%%%%%%%%%%%%%%%%%%%%%

Any
$S\in  \Gamma(\sM;T^1_1\sM)$ induces a linear map from $\Gamma(\sM;T\sM)$
to $\Gamma(\sM;T\sM)$ by virtue of
$$
S u=(S^i_j \frac{\partial}{\partial x^i}\otimes dx^j )u = S^i_j u^j  \frac{\partial}{\partial x^i},
\quad u=u^j \frac{\partial}{\partial x^j}\in\Gamma(\sM;T\sM).
$$
%where $\la \cdot, \cdot\ra: T^*\sM \times T\sM \to C^\infty(\sM)$ is the (fiber-wise defined) duality paring.
The dual $S^*$ of $S\in  \Gamma(\sM;T^1_1\sM)$  is a linear map from  $\Gamma(\sM;T^*\sM)$ to $\Gamma(\sM;T^*\sM)$,
defined by
$$
S^* \alpha=(S^i_j dx^j \otimes \frac{\partial}{\partial x^i})\alpha = S^i_j  \alpha_i dx^j,
 \quad \alpha  = \alpha_i dx^i\in \Gamma(\sM;T^*\sM).
$$
The adjoint $S^\sT$ of $S\in  \Gamma(\sM;T^1_1\sM)$
is the linear map from $\Gamma(\sM;T\sM)$ to $\Gamma(\sM;T\sM)$ defined by $S^{\sT} =\gs S^* \gf$, or more precisely,
\begin{equation*}
%\label{ST}
S^\sT u = \gs[ S^* (\gf u)], \quad u \in \Gamma (\sM; T\sM).
\end{equation*}
It holds that $(Su|v)_g=(u|S^{T}v)_g$ for  tangent fields $u,v$.
In local coordinates,
$
S^\sT = g^{i\ell} S^m_\ell g_{jm} \frac{\partial}{\partial x^i} \otimes dx^j.
$

%%%%%%%%%%%%%%%%%%%%%%%%%%%%%%%%%%%%%%%%
%connection
%%%%%%%%%%%%%%%%%%%%%%%%%%%%%%%%%%%%%%%%
Let $\nabla$ be the Levi-Civita connection on $\sM$.
For $u\in C^1(\sM; T\sM)$, the covariant derivative $\nabla u\in C(\sM; T^1_1\sM)$ is given in local coordinates by
$$
\nabla u= \nabla_j u\otimes dx^j= (\partial_j u^i +\Gamma^i_{jk}u^k)\frac{\partial}{\partial x^i}\otimes dx^j =:u^i_{|j}\frac{\partial}{\partial x^i}\otimes dx^j,
$$
where $u=u^i\frac{\partial}{\partial x^i}$,
$\nabla_j = \nabla_{\frac{\partial}{\partial x^j}}$,
 and $\Gamma^i_{jk}$ are the Christoffel symbols.
It follows that  $\nabla u + [\nabla u]^{T} $ is given in local coordinates by
\begin{equation}
\label{AA}
D_u:=\nabla u + [\nabla u]^{\sT}= \big( u^i_{|j} + g^{i\ell} u^m_{|\ell} g_{jm}\big) \frac{\partial}{\partial x^i}\otimes dx^j
\end{equation}
and
\begin{equation}
\label{AB}
D(u):=(\nabla u + [\nabla u]^{\sT})^\sharp =\left(g^{jk} u^i_{|k} + g^{ik} u^j_{|k}\right)\frac{\partial}{\partial x^i}\otimes \frac{\partial }{\partial x^j}.
\end{equation}

The extension of the Levi-Civita connection on $C^1(\sM; T^{\sigma}_{\tau}{\sM})$  is again denoted by $\nabla:=\nabla_g$.
For $a\in C^1(\sM; T^{\sigma}_\tau \sM)$, $\nabla a \in C(\sM; T^{\sigma}_{\tau+1}\sM)$ is given in local coordinates by
$\nabla a = \nabla_j a \otimes dx^j$,  and
$$
\dv : C^1(\sM; T^{\sigma }_\tau \sM) \to C(\sM; T^{\sigma -1}_\tau \sM), \quad \sigma\ge 1,
$$
is the divergence operator, defined by
$
\dv a = {\sf C}^{\sigma}_{\tau +1}(\nabla a).
$
In particular,
$$
\dv u= u^i_{|i} \quad \text{for}\quad u=u^i \frac{\partial}{\partial x^i}
$$
and
\begin{equation*}
\begin{aligned}
\dv S  = S^{ik}_{|k} \frac{\partial} {\partial x^i} =\left( \partial_k S^{ik} + \Gamma^i_{kl} S^{lk} + \Gamma^k_{kl}S^{il}\right)\frac{\partial} {\partial x^i}\quad \text{for}\quad S=S^{ij} \frac{\partial} {\partial x^i} \otimes  \frac{\partial} {\partial x^j}.
\end{aligned}
\end{equation*}
This implies 
\begin{equation*}
\label{div-uv}
\dv (u\otimes v) = \nabla_v u + (\dv v)u, 
\end{equation*}
 where
$u\otimes v= u^iv^j \frac{\partial} {\partial x^i} \otimes  \frac{\partial} {\partial x^j}$.
 In particular,
\begin{equation}
\label{div-uv-sigma}
 \dv (u\otimes v) = \nabla_v u \quad \text{in case}\quad  \dv v =0.
\end{equation}
The following relation is well-known, see for instance~\cite[Lemma 2.1]{SaTu20},
\begin{equation*}
2\,\dv D(u) = (\Delta + \Ric) u.
\end{equation*}
In case $\dim \sM=2$, one has
\begin{equation*}
2\,\dv D(u)= (\Delta + \kappa) u,
\end{equation*}
where $\kappa$ is the Gaussian curvature of $\sM$.
%%%%%%%%%%
%grdient
%%%%%%%%%%
For a scalar function $h \in C^1(\sM; \bK)$, the gradient vector $\gd h\in C(\sM; T\sM)$ is defined by the relation
$$
(\gd h | u)_g := \la \nabla h , u\ra_g= \nabla_u h , \quad u\in C(\sM; T\sM),
$$
where $\nabla h\in C(\sM; T^\ast \sM)$ is the covariant derivative of $h$
and $\la \cdot, \cdot\ra_g$ denotes the duality pairing between $T\sM$ and $T^*\sM$.  In local coordinates, we have
$$
(\gd h)^i = g^{ij} \partial_j h ,\quad 1\le i \le n.
$$

%    For the curvature tensor $R(u,v)w:= [\nabla_u, \nabla_v]w - \nabla_{[u,v]}w$, with $u,v,w\in \Gamma(\sM; T\sM)$,
%    we use the convention (as in \cite{Petersenbook, SaTu20}, for instance)
%    $$
%    R\left(\frac{\partial}{\partial x^i}, \frac{\partial}{\partial x^j}\right) \frac{\partial}{\partial x^k} = R^\ell_{ijk } \frac{\partial}{\partial x^\ell}.
%    $$
%    The Ricci tensor $\Ric\in T^0_2\sM$ is then defined by
%    $\Ric_{jk}=R^i_{ijk}.$
%    
%    
%    \medskip
%    The generalized metric $g^{\tau}_{\sigma}$ on $T^{\sigma}_{\tau}{\sM}$ is still written as $(\cdot|\cdot)_{g}$.
%    In addition,
%    \begin{align*}
%    |\cdot|_g:C^\infty(\sM; T^{\sigma}_{\tau}{\sM})\rightarrow{C^{\infty}}({\sM}),\hspace*{.5em} a\mapsto\sqrt{(a|a)_g}
%    \end{align*}
%    is called the (vector bundle) \emph{norm} induced by $g$.
%    

\section{Killing fields}\label{Appendix-Killing}
 A vector field $u\in C^1(\sM; T\sM)$ is called a \emph{Killing field} if
\begin{equation}
\label{Killing-def}
(\nabla_v u | w)_g + (\nabla_w u | v)_g =0,\quad v,w\in \Gamma(\sM ; T\sM).
\end{equation}
It is not difficult to see that 
\begin{equation}
\label{Killing-equivalent}
\text{$u$ is a Killing vector field} \iff D_u=0,
\end{equation}
where $D_u = \nabla u + (\nabla u)^{\sf T}$.
If follows from  \eqref{AB} that $u$ is a Killing field iff
\begin{equation}
\label{Killing-local}
g^{jk} u^i_{|k} + g^{ik} u^j_{|k} =0.
\end{equation}
%%%%%%%%
\begin{lemma}
\label{lem: Killing-grad}
Suppose $u,v$ are Killing fields. Then
$$\nabla _u v+ \nabla_v u =  -\gd (u|v)_g . $$
In particular, for any Killing field $u$, 
$$ \nabla_u u = -\frac{1}{2} \gd \| u \|^2_g. $$
\end{lemma}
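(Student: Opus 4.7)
The plan is to test the identity against an arbitrary tangent vector field $w$ and use the metric compatibility of the Levi-Civita connection together with the Killing equation \eqref{Killing-def} applied twice (once for $u$ and once for $v$). This turns the statement into a bookkeeping of signs.

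First I would recall that, by the defining relation of the gradient and metric compatibility,
\begin{equation*}
(\gd (u|v)_g \,|\, w)_g = \nabla_w (u|v)_g = (\nabla_w u \,|\, v)_g + (u \,|\, \nabla_w v)_g
\end{equation*}
for every $w \in \Gamma(\sM; T\sM)$. This is the only non-trivial analytic input; everything else is algebra.

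Next I would apply the Killing condition \eqref{Killing-def} to $u$ with the pair $(w, v)$ in place of $(v, w)$, giving $(\nabla_w u \,|\, v)_g = -(\nabla_v u \,|\, w)_g$, and similarly for $v$ with the pair $(w, u)$, giving $(\nabla_w v \,|\, u)_g = -(\nabla_u v \,|\, w)_g$. Substituting both into the previous display yields
\begin{equation*}
(\gd (u|v)_g \,|\, w)_g = -(\nabla_v u \,|\, w)_g - (\nabla_u v \,|\, w)_g = -(\nabla_u v + \nabla_v u \,|\, w)_g.
\end{equation*}
Since $w$ is arbitrary and $(\cdot|\cdot)_g$ is non-degenerate on $T\sM$, the first claim $\nabla_u v + \nabla_v u = -\gd(u|v)_g$ follows. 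The special case is then immediate by setting $v = u$, which produces $2\nabla_u u = -\gd \|u\|_g^2$.

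I do not anticipate any real obstacle here; the only care needed is keeping track of the sign conventions in \eqref{Killing-def} and ensuring the two applications of the Killing equation are made with the correct slotting of arguments.
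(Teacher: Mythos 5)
Your proof is correct and rests on the same two ingredients as the paper's own argument --- metric compatibility of the Levi-Civita connection and the antisymmetry of $\nabla u$ encoded in the Killing condition --- the only difference being that you work invariantly by pairing with an arbitrary test field $w$ and applying \eqref{Killing-def} twice, whereas the paper carries out the identical computation in local coordinates via $D_u=0$ and the transpose. The sign bookkeeping and the specialization $v=u$ are both handled correctly, so there is nothing to fix.
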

%%%%%%%%
\begin{proof}
It follows from \eqref{Killing-equivalent} that
\begin{equation*}
 \nabla_u v + \nabla_{v} u =  (\nabla v)u + (\nabla u ) v  
= -  ((\nabla v)^\sT u + (\nabla u )^\sT v).    
%=  - \gd (v | u)_g  
\end{equation*}
According to \eqref{AA},
\begin{equation*}
 (\nabla v)^\sT u + (\nabla u )^\sT v = 
 g^{i\ell} (u^m_{| \ell } v^j  + v^m_{| \ell } u^j) g_{jm} \frac{\partial}{\partial x^i}.
\end{equation*}
On the other hand, using the metric property of the connection, 
\begin{equation*}
\begin{aligned}
\gd (u | v)_g &= g^{i\ell} \nabla_\ell (u | v)_g \frac{\partial}{\partial x^i}
= g^{i\ell}\left( (\nabla_\ell u | v)_g  +  (u | \nabla_\ell v)_g\right)\frac{\partial}{\partial x^i} \\
&= g^{i\ell} (u^m_{| \ell } v^j  + v^m_{| \ell } u^j) g_{jm} \frac{\partial}{\partial x^i}.
\end{aligned}
\end{equation*}
Hence the assertion follows.
\end{proof}
%%%%%%%%%
\section{The rotation operator K}\label{Appendix-K}
Here we explicitly assume that $\dim \sM =2$. 
Then the \emph{Levi-Civita symbols} $\eps_{ij}$ are defined by
\begin{equation*}
%\label{eps-ij}
\epsilon_{ij} = \sqrt{\det(g)} \begin{cases}
1, & \text{if } (i, j) = (1, 2), \\
-1, & \text{if } (i, j) = (2, 1), \\
0, & \text{if } i = j.
\end{cases}
\end{equation*}
Suppose $u\in \Gamma (\sM;  T\sM)$. Then the \emph{rotation operator} $K$ is defined by
\begin{equation*}
%\label{K-definition}
Ku = \epsilon^k_j u^j \frac{\partial}{\partial x^k} = g^{ki} \epsilon_{ij} u^j \frac{\partial}{\partial x^k} .
\end{equation*}
One readily verifies that
\begin{equation}
\label{K-orthogonal}
(Ku | u)_g =0,\quad u\in \Gamma(\sM; T\sM).
\end{equation}
Hence, $u$ and $Ku$ are orthogonal. 
One might think of $K$ as rotating vectors by an angle of $90$ degrees.
A straightforward computation also shows that
\begin{equation}
\label{Kuv}
(Ku | v)_g= - (u | Kv)_g,\quad  u,v\in \Gamma(\sM; T\sM) , 
\end{equation}
and
\begin{equation}
\label{K-twice}
K^2u= -u,\quad u\in \Gamma(\sM; T\sM) .
\end{equation}
The  \emph{vorticity of a vector field} is defined by
\begin{equation}
\label{K-rot}
{\sf rot\,} u = \dv (Ku),\quad u\in C^1(\sM; T\sM). 
\end{equation}
One shows that
\begin{equation*}
%\label{rot-grad}
{\sf \rot}(\gd h)=0,\quad h\in C^2(\sM).
\end{equation*}
Suppose $\sM = \bS^2_a$, the round sphere in $\R^3$ of radius $a$, centered at the origin.
Let 
\begin{equation*}
x= a\sin \phi \cos \theta,\quad y=a \sin \phi \sin \theta, \quad z= a\cos \phi, \quad \theta \in [0, 2\pi),\ \phi\in (0,\pi),
\end{equation*}
be spherical coordinates for $\bS^2_a$. 
Here, $\theta$ corresponds to the latitude and $\phi$ to the co-latitude, respectively, 
where $\phi=0$ at the north pole.
We then have with the assignment $\theta \leftrightarrow 1$ and $\phi \leftrightarrow 2$
\begin{equation*}
[g]=[g_{ij}] = 
\begin{bmatrix}
& a^2 \sin^2 \phi &0 \\
&0                         &a^2
\end{bmatrix}.
\end{equation*}
One then readily verifies that
\begin{equation*}
%\label{Ku-spherical}
Ku = a^2 \sin \phi \left(\frac{1}{a^2\sin^2 \phi} u^\phi \frac{\partial}{\partial \theta} 
- \frac{1}{a^2} u^\theta  \frac{\partial}{\partial \phi}
\right), \quad u = u^\theta  \frac{\partial}{\partial \theta} + u^\phi  \frac{\partial}{\partial \phi}.
\end{equation*}
In particular,
\begin{equation*}
K \left(\frac{\partial}{\partial \theta}\right) = -  \sin \phi  \frac{\partial}{\partial \phi},
\quad
K \left(\frac{\partial}{\partial \phi }\right)
=     \frac{1}{ \sin \phi}  \frac{\partial}{\partial \theta}. 
\end{equation*}
Hence, $K$ rotates vectors  in a counterclockwise direction: vectors directed eastward are turned northward, and vectors directed southward are turned eastward.

\medskip\noindent
In spherical coordinates, the Coriolis term $C$ is given by
\begin{equation}
\label{Cu-spherical}
Cu=-2\omega \cos\phi Ku 
= 2  a^2 \omega \sin \phi \cos \phi \left(\frac{-1}{a^2\sin^2 \phi} u^\phi \frac{\partial}{\partial \theta} 
+  \frac{1}{a^2} u^\theta  \frac{\partial}{\partial \phi}\right)
\end{equation}
for  $u = u^\theta  \frac{\partial}{\partial \theta} + u^\phi  \frac{\partial}{\partial \phi}$.
One can now infer that $C$ rotates vectors in a clockwise direction on the northern hemisphere; that is, vectors are deflected to the right
on the northern hemisphere
(and to the left on the southern hemisphere). Moreover, the effect of $C$ vanishes at the equator and is strongest 
at the poles. This precisely captures the effect of the Coriolis force.
%%%%%%%%%
\begin{lemma} 
\label{u-star}
Suppose $u$ is a Killing field and $(\gd f  | u)_g=0$, where $f= -2 \omega \cos \phi$.
Then 
$
u=u_* = c\frac{\partial }{\partial \theta},
$
where $c$ is a constant, and 
$$
Cu_*= \gd h_*,\quad\text{where}\quad h_*= - a^2 c\omega \cos^2 \phi.
$$ 
\end{lemma}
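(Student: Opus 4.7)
The approach is coordinate-based throughout. First, I would translate the orthogonality condition $(\gd f | u)_g = 0$ into a pointwise constraint on the components of $u$, and then solve the Killing equations under that constraint to pin down $u$ up to a single real parameter. Since $f = -2\omega\cos\phi$ depends only on the colatitude, using $g^{\phi\phi} = 1/a^2$ gives
\[
\gd f = \frac{2\omega\sin\phi}{a^2}\frac{\partial}{\partial\phi},
\]
so for $u = u^\theta\frac{\partial}{\partial\theta} + u^\phi\frac{\partial}{\partial\phi}$ the metric pairing yields $(\gd f | u)_g = 2\omega\sin\phi\, u^\phi$. The hypothesis therefore forces $u^\phi \equiv 0$ away from the poles, and by continuity globally on $\bS^2_a$.

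Next, with $u_\phi = 0$ and $u_\theta = a^2\sin^2\phi\, u^\theta$, I would write the Killing equations $\nabla_i u_j + \nabla_j u_i = 0$ in spherical coordinates. Using the Christoffel symbols $\Gamma^\phi_{\theta\theta} = -\sin\phi\cos\phi$ and $\Gamma^\theta_{\theta\phi} = \cot\phi$ (the others vanishing), the $(\theta,\theta)$-equation reduces to $\partial_\theta u_\theta = 0$ and the mixed $(\theta,\phi)$-equation to $\partial_\phi u_\theta = 2\cot\phi\, u_\theta$. The latter integrates to $u_\theta = C\sin^2\phi$ for some $C\in\R$, so $u^\theta$ is a constant $c$ and $u = c\,\frac{\partial}{\partial\theta}$ as claimed. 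A more conceptual alternative is to invoke the fact that Killing fields on $\bS^2_a$ form the three-dimensional Lie algebra of infinitesimal rotations in $\R^3$, among which only rotations about the $z$-axis have vanishing $\phi$-component.

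For the second assertion, I would substitute $u_*^\theta = c$ and $u_*^\phi = 0$ into the explicit representation~\eqref{Cu-spherical} to obtain $Cu_* = 2c\omega\sin\phi\cos\phi\,\frac{\partial}{\partial\phi}$. On the other hand, from $h_* = -a^2 c\omega\cos^2\phi$ one reads off $\partial_\phi h_* = 2a^2 c\omega\sin\phi\cos\phi$ and $\partial_\theta h_* = 0$, so $\gd h_* = g^{\phi\phi}\partial_\phi h_*\,\frac{\partial}{\partial\phi}$ gives exactly the same expression.

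I expect the main obstacle to be the rigidity step, namely extracting $u = c\,\frac{\partial}{\partial\theta}$ from the Killing condition combined with $u^\phi \equiv 0$. The ODE route demands careful bookkeeping of Christoffel symbols, while the Lie-algebraic alternative presupposes a background identification of the isometry algebra of $\bS^2$. Once this step is in hand, the verification $Cu_* = \gd h_*$ is a routine calculation in spherical coordinates.
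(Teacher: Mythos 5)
Your proposal is correct and follows essentially the same route as the paper: use $(\gd f\,|\,u)_g=0$ to force $u^\phi\equiv 0$, then write out the Killing equations in spherical coordinates to conclude $u^\theta$ is constant, and finally verify $Cu_*=\gd h_*$ by direct computation from \eqref{Cu-spherical}. The only cosmetic difference is that you work with the lowered-index form $\nabla_i u_j+\nabla_j u_i=0$ (integrating $\partial_\phi u_\theta=2\cot\phi\,u_\theta$), whereas the paper uses the contravariant form \eqref{Killing-local}, which yields $\partial_\theta u^\theta=\partial_\phi u^\theta=0$ directly.
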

%%%%%%%%%%
\begin{proof}
We obtain in spherical coordinates for $u=u^\theta \frac{\partial}{\partial \theta} + u^\phi \frac{\partial}{\partial \phi}$
$$
(\gd \cos \phi | u)_g =  (\partial_\theta  \cos \phi ) u^\theta + (\partial_\phi \cos \phi) u^\phi =0.
$$
This implies $u= u^\theta \frac{\partial}{\partial \theta}$. Since $u$ is a Killing field, we conclude that 
$u=u_*= c  \frac{\partial}{\partial \theta}$, where $c$ is a constant.

This can, for instance, be derived from  \eqref{Killing-local}.
Indeed, in spherical coordinates we have for the Christoffel symbols
\begin{align*}
[\Gamma^1_{ij}]= 
\begin{bmatrix}
& 0 & \cot \phi \\
&\cot \phi  & 0 
\end{bmatrix},
\quad
[\Gamma^{2}_{ij}] =
\begin{bmatrix}
& -\sin \phi \cos \phi &0 \\ 
&0   &0
\end{bmatrix} ,
\end{align*}
where we used the convention $\theta \leftrightarrow 1$ and $\phi \leftrightarrow 2$.
We will now employ \eqref{Killing-local},
$$
g^{jk} u^i_{|k} + g^{ik} u^j_{|k} = g^{jk}(\partial_k u^i +  \Gamma^i_{kl}  u^l) + g^{ik}(\partial_k u^j + \Gamma^j_{kl} u^l)=0,
$$
for the vector field 
$u=u^1\frac{\partial}{\partial x^1}= u^\theta \frac{\partial}{\partial \theta}.$
Choosing $i=j=1$, we obtain
$$
0= g^{jk} u^i_{|k} + g^{ik} u^j_{|k} =
2 g^{11}(\partial_1 u^1 + \Gamma^1_{11}u^1)=  \frac{2}{a^2 \sin^2 \phi} \partial_\theta u^\theta ,
$$
as $\Gamma^1_{11}=0$. Hence  $\partial_\theta u^\theta=0$.
Choosing $i=1$ and $j=2$, a simple computation yields
\begin{equation*}
\begin{aligned}
0= g^{jk} u^i_{|k} + g^{ik} u^j_{|k} 
& =g^{22}(\partial_2 u^1 +\Gamma^1_{21} u^1) + g^{11}(\partial_1 u^2 + \Gamma^2_{11}u^1)  \\
& = g^{22}\partial_2 u^1 + (g^{22}\Gamma^1_{21} + g^{11} \Gamma^2_{11}) u^1 \\
 &= \frac{1}{a^2} \partial_\phi u^\theta.
\end{aligned}
\end{equation*}
Hence, $\partial_\phi u^\theta=0$ as well. This implies $u^\theta =c$ for some constant $c$.

\medskip\noindent
Employing \eqref{Cu-spherical} results in 
\[
Cu_* =  2 c\omega   \sin \phi \cos \phi  \frac{\partial}{\partial \phi} .
\]
Choosing $h_* = - a^2  c\omega \cos^2 \phi$ then yields the assertion.
\end{proof}

%%%%%%%%%%

\section{divergence theorem, Helmholtz projection, \\ Korn's inequality}\label{Appendix D}

%%%%%%%%%%%%%%%%%%%%%%%%%%%%%%%%%%%%%%%%
%divergence theorem
%%%%%%%%%%%%%%%%%%%%%%%%%%%%%%%%%%%%%%%%
\begin{proposition} [Divergence theorem]
\label{pro: divergence thm}  
\phantom{new line}

\smallskip\noindent
Suppose $u,v,w\in C(\sM; T\sM)$ and $\phi\in C(\sM)$ are sufficiently regular.  Then

\begin{enumerate}
\item[]
\vspace{-4mm}
\item[{\rm (a)}]
\begin{equation}
\label{div-scalar}
\begin{aligned}
 \int_{\sM} (\dv u)h  \,d\mu_g = -\int_{\sM} (u | \gd h )_g \, d\mu_g  = - \int_{\sM} \nabla_u h \,d\mu_g ,
\end{aligned}
\end{equation}
where  $\mu_g$ is the volume element induced by $g$.
\vspace{2mm}
\item[{\rm (b)}] 
\begin{enumerate}
\vspace{2mm}
\item[{\rm (i)}] $((\Delta  + \Ric)u | v)_\sM = - 2(D_u | D_v)_\sM$.
where $D_u= \frac{1}{2} (\nabla u + [\nabla u]^{\sf T})$ and $D_v$ is defined analogously. 
\vspace{2mm}
\item[{\rm (ii)}]  $(\dv (u\otimes v) | w)_{\sM} =  - (u\otimes v_\flat | \nabla w)_{\sM}.$ 
\end{enumerate}
\end{enumerate}
\end{proposition}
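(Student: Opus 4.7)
\textbf{Proof plan for Proposition~\ref{pro: divergence thm}.}

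The backbone for all three assertions is the classical divergence theorem on a closed Riemannian manifold: for every sufficiently regular $X\in \Gamma(\sM; T\sM)$, one has $\int_\sM \dv X \, d\mu_g = 0$. I would establish this once, either by invoking Stokes' theorem applied to the $(n-1)$-form $\iota_X \, d\mu_g$, or by a partition-of-unity argument reducing to local coordinates where $\dv X = \frac{1}{\sqrt{\det g}} \partial_i(\sqrt{\det g}\, X^i)$ so that the integral telescopes. Everything else is then a matter of finding the right Leibniz rule and applying this global identity.

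For part (a), the Levi-Civita connection is compatible with contractions, so $\dv(hu) = h \, \dv u + (\gd h \mid u)_g$, and by definition of the gradient $(\gd h \mid u)_g = \nabla_u h$. Integrating and using $\int_\sM \dv(hu)\, d\mu_g = 0$ immediately yields both equalities. Part (b)(ii) is the same idea applied to a $(2,0)$-tensor: the contracted Leibniz rule gives, in local coordinates, $\dv((u\otimes v)\cdot w_\flat) = (\dv(u\otimes v) \mid w)_g + (u\otimes v_\flat \mid \nabla w)_g$ after raising/lowering the appropriate index with the metric (this is where the $v_\flat$ appears — the pairing between the $(2,0)$-tensor $u\otimes v$ and the $(1,1)$-tensor $\nabla w$ requires one musical conversion). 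Integrating and using the divergence theorem gives the assertion.

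For part (b)(i), the starting point is the identity $2\, \dv D(u) = (\Delta + \Ric)u$ already recorded in Appendix~A, where $D(u) = (\nabla u + [\nabla u]^{\sT})^\sharp$. Taking the $L_2$-pairing against $v$ and applying an analogue of (b)(ii) for $(2,0)$-tensors gives
\[
((\Delta + \Ric)u \mid v)_\sM = -2 \, (D(u) \mid \nabla v_\flat)_\sM,
\]
where the right-hand side denotes the natural trace-pairing of a $(2,0)$-tensor with a $(0,2)$-tensor. The key algebraic step is then to exploit that $D(u)$ is symmetric: writing $\nabla v_\flat$ as the sum of its symmetric and antisymmetric parts, the antisymmetric contribution drops out, leaving the pairing with the symmetric part, which is exactly $(D_u \mid D_v)_g$ up to the overall factor fixed by the convention $D_u = \tfrac12(\nabla u + [\nabla u]^{\sT})$. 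Bookkeeping of the factor of $2$ between $D(u)$ and $D_u$ then produces the stated identity.

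The only real obstacle is bookkeeping: tracking the factors of $\tfrac12$ coming from the symmetrization in $D_u$ versus the $2$ in $2\,\dv D(u) = (\Delta + \Ric)u$, and keeping the musical isomorphisms straight when passing between the $(2,0)$- and $(1,1)$-incarnations of the symmetric gradient. If one prefers to bypass this bookkeeping, an alternative for (b)(i) is a direct coordinate computation: integrate $(\Delta u \mid v)_\sM$ by parts to get $-(\nabla u \mid \nabla v)_\sM$, then add the Ricci contribution using the Weitzenböck identity, and finally symmetrize by splitting $\nabla u = D_u + \Omega_u$ and $\nabla v = D_v + \Omega_v$, observing that the cross terms $(D_u \mid \Omega_v)$, $(\Omega_u \mid D_v)$ vanish pointwise and that the Ricci term cancels the $(\Omega_u \mid \Omega_v)$-part. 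Either route reduces the proof to a single clean invocation of the divergence theorem on the closed manifold $\sM$.
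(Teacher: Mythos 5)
The paper offers no in-text proof of this proposition at all --- it simply cites \cite[Lemma B.1]{SSW25} --- so your sketch is not so much a different route as the only actual argument on the table, and its skeleton is the standard and correct one. Parts (a) and (b)(ii) are fine as you describe them: the Leibniz rules $\dv(hu)=h\,\dv u+(\gd h\,|\,u)_g$ and $\dv X=(\dv(u\otimes v)\,|\,w)_g+(u\otimes v_\flat\,|\,\nabla w)_g$ for the vector field $X^j=u^iv^jw_i$ (the contraction of $u\otimes v$ against $w_\flat$), combined with $\int_\sM \dv X\,d\mu_g=0$ on the closed manifold, give exactly the two assertions; the coordinate check $u^iv_j w^k_{|l}g_{ik}g^{jl}=u^iv^jw_{i|j}$ confirms that the single musical conversion lands where you say it does.

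Two points in (b)(i) need repair rather than just ``bookkeeping.'' First, the constants in your displayed intermediate identity do not close up: if $D(u)$ is taken literally from \eqref{AB} (no factor $\tfrac12$), then integrating $2\,\dv D(u)=(\Delta+\Ric)u$ by parts gives $-2\bigl(D(u)\,|\,\mathrm{sym}\,\nabla v_\flat\bigr)_\sM=-\bigl(D(u)\,|\,D(v)\bigr)_\sM=-4(D_u|D_v)_\sM$ with $D_u=\tfrac12(\nabla u+[\nabla u]^{\sT})$, which is twice the claimed answer. The resolution is that the ``$2$'' in the cited identity from \cite{SaTu20} belongs with the $\tfrac12$-normalized deformation tensor, so the correct starting point is $\dv\bigl((\nabla u+[\nabla u]^{\sT})^\sharp\bigr)=(\Delta+\Ric)u$; a quick sanity check in flat space against $(\Delta u\,|\,v)_\sM=-(\nabla u\,|\,\nabla v)_\sM$ would have caught this. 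Second, commuting covariant derivatives actually gives $\dv\bigl((\nabla u+[\nabla u]^{\sT})^\sharp\bigr)=(\Delta+\Ric)u+\gd(\dv u)$, so (b)(i) as you derive it carries an extra term $(\dv u\,|\,\dv v)_{L_2}$ unless $\dv u=0$. This is harmless in the paper, where the identity is only ever applied to solenoidal fields, but your proof must either impose that hypothesis or keep the extra term. With these two corrections the argument is complete; the alternative route you sketch at the end (integrate $(\Delta u\,|\,v)_\sM$ by parts and split $\nabla u$ into symmetric and antisymmetric parts) works equally well and has the virtue of making the $\dv u=0$ requirement visible in the cancellation of the Ricci and antisymmetric contributions.
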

\begin{proof}
For a proof, we refer to~\cite[Lemma B.1]{SSW25} (where a more general situation involving manifolds with boundaries is considered).
\end{proof}

%%%%%%%%%%
\begin{proposition}
\label{pro: Helmholtz}
Let $1<q<\infty$. For each $u\in L_q(\sM; T\sM)$ there exists a unique
solution $\gd \psi_u \in L_q(\sM; T\sM)$ of
$$(\gd \psi_u|\gd \phi)_\sM=(u|\gd \phi)_\sM,\quad \phi\in \dot{H}_{q'}^1(\sM).$$
The solution satisfies
$$\|\gd \psi_u \|_{Lq(\sM)}\le C \| u\|_{Lq(\sM)}. $$
\end{proposition}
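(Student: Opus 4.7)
The plan is to recognise the identity $(\gd\psi_u|\gd\phi)_\sM = (u|\gd\phi)_\sM$ as the weak formulation of the Neumann-type problem $-\Delta_g \psi_u = -\dv u$ on the closed manifold $\sM$, and then to appeal to the $L_q$-theory of the Laplace--Beltrami operator. I would split the argument into three steps: a Hilbert-space construction for $q=2$, an extension to general $q \in (1,\infty)$ via elliptic regularity, and a separate verification of uniqueness of $\gd\psi_u$.

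First I fix the functional setting. Since $\sM$ is compact and connected, Poincar\'e's inequality makes the homogeneous space $\dot H^1_q(\sM) := H^1_q(\sM)/\bR$ a Banach space under $\|\psi\|_{\dot H^1_q} := \|\gd\psi\|_{L_q(\sM)}$. The map $\Lambda_u: \dot H^1_{q'}(\sM) \to \bR$ defined by $\Lambda_u(\phi) := (u|\gd\phi)_\sM$ is well-defined on the quotient (it vanishes on constants by the divergence theorem \eqref{div-scalar}) and bounded, with $\|\Lambda_u\|_{(\dot H^1_{q'})'} \le \|u\|_{L_q}$ by H\"older's inequality.

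For $q=2$ the bilinear form $a(\psi,\phi) := (\gd\psi|\gd\phi)_\sM$ is precisely the Hilbert inner product on $\dot H^1_2(\sM)$, so the Riesz representation theorem produces a unique $\psi_u \in \dot H^1_2(\sM)$ with $a(\psi_u,\phi) = \Lambda_u(\phi)$ for every $\phi$; testing with $\phi = \psi_u$ and applying Cauchy--Schwarz yields $\|\gd\psi_u\|_{L_2} \le \|u\|_{L_2}$. For a general $q \in (1,\infty)$ I would invoke the standard fact that $-\Delta_g$ induces an isomorphism $\dot H^1_q(\sM) \to (\dot H^1_{q'}(\sM))'$ on the closed manifold $\sM$; this is a classical consequence of the bounded $H^\infty$-calculus of the Laplace--Beltrami operator on $L_q(\sM)$, and can equivalently be deduced from Calder\'on--Zygmund theory on compact Riemannian manifolds. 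Since $\Lambda_u$ belongs to $(\dot H^1_{q'}(\sM))'$ with norm at most $\|u\|_{L_q}$, the existence of $\psi_u \in \dot H^1_q(\sM)$ with $\|\gd\psi_u\|_{L_q} \le C\|u\|_{L_q}$ follows.

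Uniqueness of $\gd\psi_u$ is the easiest ingredient: if $\psi \in \dot H^1_q(\sM)$ satisfies $(\gd\psi|\gd\phi)_\sM = 0$ for every $\phi \in \dot H^1_{q'}(\sM)$, then testing against smooth $\phi$ shows that $\Delta_g \psi = 0$ in the sense of distributions; elliptic regularity upgrades $\psi$ to a smooth harmonic function, and connectedness of $\sM$ forces $\psi$ to be constant, hence $\gd\psi = 0$. The main obstacle is the case $q\neq 2$: the $L_2$ step is pure Hilbert-space reasoning, whereas the $L_q$-isomorphism property of $-\Delta_g$ is the substantive input. For a self-contained treatment I would localise via a partition of unity subordinate to a finite atlas, freeze the leading coefficients, and reduce to the $L_q$-boundedness of the Riesz transform on $\bR^n$, reassembling via standard perturbation arguments for second-order elliptic operators with smooth coefficients.
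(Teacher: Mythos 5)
Your argument is correct in outline, but note that the paper does not actually prove this proposition: its ``proof'' is a citation to \cite[Lemma B.6]{SSW25}, where the (more general, boundary-value) version is established. What you have written is essentially a self-contained reconstruction of what that reference does: identify the identity as the weak formulation of a Poisson/Neumann-type problem $\Delta_g\psi_u=\dv u$ on the closed manifold, handle $q=2$ by Riesz representation on the Hilbert space $\dot H^1_2(\sM)$, and reduce general $q\in(1,\infty)$ to the statement that $-\Delta_g$ is an isomorphism from $\dot H^1_q(\sM)$ onto $(\dot H^1_{q'}(\sM))'$. Your uniqueness step (Weyl's lemma plus connectedness, forcing $\gd\psi=0$) is clean and covers all $q$ uniformly, which is better than testing with $\psi$ itself (only available for $q\ge 2$). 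Two small caveats. First, the remark that $\Lambda_u$ vanishes on constants ``by the divergence theorem'' is unnecessary: $\gd\phi=0$ for constant $\phi$, so $\Lambda_u$ descends to the quotient for free. Second, and more substantively, the isomorphism $-\Delta_g:\dot H^1_q(\sM)\to(\dot H^1_{q'}(\sM))'$ does not follow ``immediately'' from the bounded $H^\infty$-calculus of the Laplace--Beltrami operator on $L_q(\sM)$; one additionally needs $L_q$-gradient (Riesz transform) bounds together with a duality/inf--sup argument to pass from a priori estimates for the $q$- and $q'$-problems to surjectivity. Your fallback route --- partition of unity, coefficient freezing, Calder\'on--Zygmund theory on $\R^n$, and perturbation --- is the standard and correct way to supply exactly this input, so the proof is sound provided that step is carried out in full; it is precisely the content outsourced by the paper to \cite{SSW25}.
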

\begin{proof}
For a proof, we refer to \cite[Lemma B.6]{SSW25} (where a more general situation is considered).
\end{proof}
If follows  that the
\emph{Helmholtz projection} 
\begin{equation*}
%\label{Helmholtz}
\PH u := u -\gd \psi_u: L_q(\sM; T\sM)\to L_q(\sM; T\sM)
\end{equation*}
is well defined and  continuous.

\medskip\noindent
For any $u\in L_q(\sM; T\sM)$ and $v\in L_{q'}(\sM; T\sM)$ it holds
\begin{equation}
\label{PH-symmetric}
\begin{aligned}
(\PH u | v)_{\sM}&= (u - \gd \psi_u | v)_{\sM} = (u|v)_{\sM} - (\gd \psi_u | v)_{\sM}\\
&= (u|v)_{\sM} - (\gd \psi_u | \gd \psi_v)_{\sM}= (u|v)_{\sM} - (u | \gd \psi_v)_{\sM}\\
&= (u| \PH v)_{\sM}
\end{aligned}
\end{equation}
as $\psi_u\in \dot{H}^1_q(\sM)$ and $\psi_v \in \dot{H}^1_{q'}(\sM)$.
%%%%%%%%%%%%%

\begin{lemma}[Korn's inequality]
\label{Appendix Lem: korn}
There exists some constant $C>0$ such that
\begin{equation*}
%\label{korn ineq}
\|v\|_{H^1_2(\sM)} \leq C \| D_v\|_{L_2(\sM)} ,\quad v\in  V^1_2,
\end{equation*}
 where $V^1_2$ is defined in \eqref{E-orth}.
 \end{lemma}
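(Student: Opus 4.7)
My plan is to prove Korn's inequality by the classical two-step scheme for coercivity estimates on a closed Riemannian manifold: (i) an integration-by-parts identity yields a \emph{weak Korn} inequality $\|v\|_{H^1_2(\sM)}^2 \le C_1\|D_v\|_{L_2(\sM)}^2 + C_2\|v\|_{L_2(\sM)}^2$; (ii) a contradiction-and-compactness argument, together with the characterization~\eqref{Killing-equivalent} of $\ker D$ as the space of Killing vector fields, removes the $\|v\|_{L_2}^2$-term on $V^1_2$ and delivers the full inequality.

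For step~(i), I apply Proposition~\ref{pro: divergence thm}(b)(i) with $u=v$:
\[
2\|D_v\|_{L_2(\sM)}^2 = -((\Delta+\kappa)v\,|\,v)_\sM.
\]
A second integration by parts for the connection (Bochner) Laplacian gives $-(\Delta v\,|\,v)_\sM = \|\nabla v\|_{L_2(\sM)}^2$, and since $\kappa = 1/a^2$ is constant on $\bS^2_a$ this rearranges to
\[
\|v\|_{H^1_2(\sM)}^2 = (1+\kappa)\,\|v\|_{L_2(\sM)}^2 + 2\,\|D_v\|_{L_2(\sM)}^2.
\]
Thus Korn's inequality on $V^1_2$ reduces to the Poincar\'e-type estimate $\|v\|_{L_2(\sM)} \le C\|D_v\|_{L_2(\sM)}$ for $v\in V^1_2$.

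For step~(ii), I argue by contradiction. Suppose no such constant exists; pick $(v_n)\subset V^1_2$ with $\|v_n\|_{L_2}=1$ and $\|D_{v_n}\|_{L_2}\to 0$. Step~(i) provides a uniform $H^1_2$-bound on the sequence, so Rellich's compact embedding on the compact manifold $\bS^2_a$ extracts a subsequence converging weakly in $H^1_2(\sM;T\sM)$ and strongly in $L_2(\sM;T\sM)$ to a limit $v_\infty$ with $\|v_\infty\|_{L_2}=1$. The constraints $\dv v_n=0$ and $(v_n\,|\,z)_\sM=0$ for $z\in\cE$ pass to the $L_2$-limit, so $v_\infty\in V^1_2$; weak lower semicontinuity of the seminorm $v\mapsto\|D_v\|_{L_2}$ forces $\|D_{v_\infty}\|_{L_2}=0$, and \eqref{Killing-equivalent} then shows that $v_\infty$ is a Killing field in $V^1_2$.

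The main obstacle is the final step, namely concluding $v_\infty=0$. Because $\bS^2_a$ admits a three-dimensional space of Killing fields (the rotations about the three coordinate axes) while $\cE$ is only one-dimensional, the orthogonality condition defining $V^1_2$ does not by itself eliminate rotations about the two axes perpendicular to the $z$-axis. To close the contradiction I would leverage the Coriolis-induced breaking of the full rotational symmetry of~\eqref{NS-Coriolis} --- captured by Lemma~\ref{u-star} and by the algebra of $K$ developed in Appendix~\ref{Appendix-K} --- to show that any Killing field surviving in the dynamically invariant slice $V^1_2$ must in fact lie in $\cE$, forcing $v_\infty\in\cE\cap\cE^\perp=\{0\}$. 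This kernel identification is the technical heart of the argument.
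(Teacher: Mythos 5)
Your two-step scheme (weak Korn via integration by parts, then compactness to remove the lower-order term) is the standard route, and steps (i) and (ii) are carried out correctly up to the point you yourself flag as ``the technical heart,'' namely identifying the Killing fields contained in $V^1_2$. (For comparison: the paper does not prove the lemma at all --- it simply cites \cite[Lemma B.2]{SSW25} --- so there is no in-paper argument to match step by step.) The gap you leave open, however, cannot be closed by the route you sketch, and in fact cannot be closed at all: with $V^1_2$ defined by \eqref{E-orth}, i.e.\ by $L_2$-orthogonality to the \emph{one-dimensional} space $\cE=\{c\,\partial/\partial\theta\}$ only, the space $V^1_2$ genuinely contains nonzero Killing fields. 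Concretely, let $v_1$ be the restriction to $\bS^2_a$ of $x\mapsto e_1\times x=(0,-x_3,x_2)$, the generator of rotations about the $x$-axis. It is tangent to the sphere, it is Killing (hence $D_{v_1}=0$ and $\dv v_1=0$ by \eqref{Killing-equivalent}), and it is $L_2$-orthogonal to $\partial/\partial\theta\sim e_3\times x$ because the pointwise inner product equals $-x_1x_3$, which integrates to zero over the sphere. Thus $v_1\in V^1_2$ while $\|D_{v_1}\|_{L_2(\sM)}=0$ and $\|v_1\|_{H^1_2(\sM)}>0$, so your compactness argument must terminate in exactly this obstruction rather than in a contradiction.

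Your proposed repair --- invoking the Coriolis-induced breaking of rotational symmetry via Lemma~\ref{u-star} and the algebra of $K$ --- is a category error: $V^1_2$ is a static function space determined solely by the divergence-free condition and orthogonality to $\cE$, and no property of the evolution equation \eqref{NS-Coriolis} can alter whether $e_1\times x$ belongs to it. What the symmetry breaking does control is the kernel of $A^{\sw}+B^{\sw}$ (Lemma~\ref{Null-weak}), which is a different object from the kernel of $v\mapsto D_v$ restricted to $V^1_2$. A provable Korn inequality on $\bS^2_a$ requires orthogonality to the full three-dimensional space of Killing fields (all rotations $\omega\times x$), which is presumably the hypothesis under which \cite[Lemma B.2]{SSW25} operates; as stated here with the one-dimensional $\cE$, Lemma~\ref{Appendix Lem: korn} needs either a strengthened orthogonality hypothesis or a separate argument handling the two extra rotation modes, and your proof cannot be completed without one of these modifications.
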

 \begin{proof}
 For a proof, we refer again to \cite[Lemma B.2]{SSW25} (where a more general situation is considered).
 \end{proof}
%%%%%%%%%%%%%%%%%%%%%%%%%%%%%%%%%%%%%%%%
\section{$H^\infty$-calculus}
\label{Appendix E}
%%%%%%%%%%%%%%%%
For the reader's convenience, we include  here the definition of \emph{bounded $H^\infty$-calculus}, and  we refer to 
\cite{PrSi16} for more details.
For $\theta\in (0,\pi]$, the open sector with angle $2\theta$ is denoted by
$$\Sigma_\theta:= \{\omega\in \mathbb{C}\setminus \{0\}: |\arg \omega|<\theta \}. $$
%%%%%%%%%%%
\begin{defi}
\label{Def sectorial}
Let $X$ be a complex Banach space, and $\cA$ be a densely defined closed linear operator in $X$ with dense range. $\cA$ is called sectorial if $\Sigma_\theta \subset \rho(-\cA)$ for some $\theta>0$ and
$$ \sup\{\|\mu(\mu+\cA)^{-1}\|_{\cL(X)} : \mu\in \Sigma_\theta \}<\infty. $$
The class of sectorial operators in $X$ is denoted by ${\mathcal S}(X)$.
The spectral angle $\phi_\cA$ of $\cA$ is defined by
$$
\phi_\cA:=\inf\{\phi:\, \Sigma_{\pi-\phi}\subset \rho(-\cA),\, \sup\limits_{\mu\in \Sigma_{\pi-\phi}} \|\mu(\mu+\cA)^{-1} \|_{\cL(X)}<\infty \}.
$$
\end{defi}
Let $\phi\in (0,\pi]$. Then
$$H^\infty(\Sigma_\phi):=\left\{f: \Sigma_\phi \to \mathbb{C}: f \text{ is analytic and } \|f\|_\infty<\infty \right\} $$
and
$$\cH_0(\Sigma_\phi) = \left\{f\in H^\infty(\Sigma_\phi): \exists s>0, c>0 \text{ s.t. } |f(z)| \leq c\frac{|z|^s}{1 +|z|^{2s}} \right\}. $$
%$$\cH_0(\Sigma_\phi) =\bigcup_{\alpha,\beta<0} \cH_{\alpha,\beta}(\Sigma_\phi), $$
%where
%$$ \cH_{\alpha,\beta}(\Sigma_\phi)=\{f\in \cH(\Sigma_\phi): |f|^\phi_{\alpha,\beta}<\infty \}. $$
%Here $|f|^\phi_{\alpha,\beta}=\sup\limits_{|\mu|\leq 1} |\mu^\alpha f(\mu)| + \sup\limits_{|\mu|\geq 1}|\mu^{-\beta} f(\mu) |.$

%%%%%%%%%
\begin{defi}
\label{Def: cHi}
Suppose that $\cA\in \cS(X)$. Then $\cA$ is said to admit a bounded $H^\infty$-calculus if there are $\phi>\phi_\cA$ and a constant $K_\phi$ such that
\begin{equation}
\label{Appendix B: cHi}
\|f(\cA) \|_{\cL(X)} \leq K_\phi \|f\|_{\infty} ,\quad f\in \cH_0(\Sigma_{\pi-\phi}).
\end{equation}
Here
\begin{equation}
\label{Def integral contour}
f(\cA):=-\frac{1}{2\pi i}\int_\Gamma (\lambda + \cA)^{-1} f(\lambda) \, d\lambda, \quad
\Gamma=
\begin{cases}
-t e^{-i \theta} \quad & \text{for } t<0, \\
t e^{ i \theta}   & \text{for } t\geq 0,
\end{cases}
\end{equation}
is a positively oriented contour
for any $\theta\in (0,\pi-\phi  ) $. 
%The class of such operators is denoted by $H^\infty(X)$. 
The $H^\infty$-angle of $\cA$ is defined as
$$\phi^\infty_\cA:=\inf\{\phi>\phi_\cA: \eqref{Appendix B: cHi} \text{ holds}\}.$$
\end{defi}
%%%%%%%%%%

%%%%%%%%%%%%%%%%%%%%%%%%%%%%%%%%%%%%%%%%%

\bigskip\noindent
{\bf Data Availability Statement:} Data sharing is not applicable, as no new datasets were generated or analyzed for this article.

\medskip\noindent
{\bf Conflict of interest:}  The authors assert that there is no conflict of interest to declare.

\goodbreak
%%%%%%%%%%%%%%%%%%


\begin{thebibliography}{99}
\frenchspacing

\bibitem{Ama95}
H.~Amann,
{\em Linear and Quasilinear Parabolic Problems}, {Monographs in Mathematics {\bf 89}},
 Birkh\"auser 1995.
 
 \bibitem{Ama13} H. Amann,
 Function spaces on singular manifolds. 
 {\em Math. Nachr.} {\bf 286} (2013), 436-475.

\bibitem{CCD17} C.H. Chan, M. Czubak, M. Disconzi,
The formulation of the Navier-Stokes equations on Riemannian manifolds.
{\em J. Geom. Phys.} {\bf 121} (2017), 335-346.

\bibitem{Czu24} M. Czubak, 
In search of the viscosity operator on Riemannian manifolds. 
{\em Notices Amer. Math. Soc.} {\bf 71} (2024), no. 1, 8-16.

\bibitem{DaPG75} G. Da Prato, P. Grisvard,
Sommes d'op\'erateurs lin\'eaires et \'equations diff\'erentielles op\'erationnelles. (French)
{\em J. Math. Pures Appl.} (9) {\bf 54} (1975), no. 3, 305-387. 

\bibitem{EbMa70} D.G. Ebin, J. Marsden,
Groups of diffeomorphisms and the motion of an incompressible fluid.
 {\em Annals of Mathematics (2)} {\bf 92} (1970), 102-163.
 
\bibitem{JOR18} T. Jankuhn, M.A. Olshanskii, A. Reusken,
Incompressible fluid problems on embedded surfaces: modeling and variational formulations.
{\em  Interfaces Free Bound.} {\bf 20} (2018), 353-377.

\bibitem{KLG17} H. Koba, C. Liu, Y. Giga,
 Energetic variational approaches for incompressible fluid systems on an evolving surface.
 {\em Quart. Appl. Math.} {\bf 75} (2017), 359-389.

 \bibitem{Maz03} A. Mazzucato,
 Besov-Morrey spaces: function space theory and applications to non-linear PDE.
 {\em Trans. Amer. Math. Soc.} {\bf 355} (2003), 1297-1364.
 
\bibitem{OQRY18}
M.A. Olshanskii, A. Quaini,  A. Reusken, V. Yushutin,
A finite element method for the surface Stokes problem.
{\em SIAM J. Sci. Comput.} {\bf 40} (2018), A2492--A2518.

%\bibitem{PrSi04}  J.~Pr\"uss, G. Simonett, 
%Maximal regularity for evolution equations in weighted $L_p$-spaces. 
%{\em Arch. Math. (Basel)} {\bf 82} (2004), 415-431. 

\bibitem{PrSi16} J.~Pr\"uss, G.~Simonett,
{\em Moving Interfaces and Quasilinear Parabolic Evolution Equations}. Monographs in Mathematics {\bf 105}, Birkh\"auser  2016.

\bibitem{PSW18} J.~Pr\"uss, G.~Simonett, M.~Wilke,
Critical spaces for quasilinear parabolic evolution equations and applications.
{\em J. Differential Equations}  {\bf 264} (2018), 2028-2074.

\bibitem{PSW20} J.~Pr\"uss, G.~Simonett M.~Wilke,
On the Navier-Stokes equations on surfaces.
{\em J. Evol. Equ.} {\bf 21} (2021), no. 3, 3153-3179.

%\bibitem{PrWi17} J.~Pr\"uss, M.~Wilke,
%Addendum to the paper "On quasilinear parabolic evolution equations in weighted $L_p$-spaces II'',
%{\em  J. Evol. Equ.} {\bf 17} (2017), 1381-1388.

%\bibitem{PrWi18} J.~Pr\"uss, M.~Wilke,
%On critical spaces for the Navier-Stokes equations.
%{\em J. Math. Fluid Mech.} {\bf 20} (2018), 733-755.

\bibitem{ReZh13} A. Reusken,Y. Zhang,
Numerical simulation of incompressible two-phase flows with a Boussinesq-Scriven interface stress tensor.
{\em Internat. J. Numer. Methods Fluids}  {\bf 7}  (2013), 1042--1058.

\bibitem{ReVo18} S. Reuther, A. Voigt,
Solving the incompressible surface Navier-Stokes equation by surface elements,
{\em Phys. Fluids}  {\bf 30} (2018), 012107. %arxiv 1709.02803.

%\bibitem{Sak96} T. Sakai,
%{\em Riemannian geometry}. Translations of Mathematical Monographs, 149. American Mathematical Society, Providence, RI, 1996.

\bibitem{SaTu20} 
M. Samavaki, J. Tuomela, 
Navier-Stokes equations on Riemannian manifolds. 
{\em J. Geom. Phys.} {\bf 148} (2020), 103543, 15 pp.

\bibitem{SSW25}
Y. Shao, G. Simonett, M. Wilke,
The Navier-Stokes equations on manifolds with boundary. 
{\em J. Differential Equations}  {\bf 416} (2025), part 2, 1602-1659. 

\bibitem{SiWi22}
G. Simonett,  M. Wilke,
$H^\infty$-calculus for the surface Stokes operator and applications.
\emph{ J. Math. Fluid Mech.} {\bf 24}, no. 4, no. 109, 23 pp (2022).



\bibitem{Tay92} M. E. Taylor,
Analysis on Morrey spaces and applications to Navier-Stokes and other evolution equations.
{\em Comm. Partial Differential Equations} {\bf 17} (1992), 1407-1456.



\end{thebibliography}
\end{document}